\documentclass[11pt]{amsart}

\usepackage{amssymb}
\usepackage{overpic}
\usepackage{enumitem}   
\usepackage{graphicx} 
\usepackage{amsrefs}
\usepackage{xcolor}
\usepackage[hidelinks]{hyperref}

\graphicspath{{Figures/}} 

\newtheorem {theorem} {Theorem} 

\newtheorem {corollary} {Corollary}
\newtheorem {lemma} {Lemma}

\newtheorem {remark}{Remark}

\begin{document}
	
\title[Limit sets in control models with large hysteresis]{Limit Sets and Global Bifurcation Structure in Planar Control Models with Large Hysteresis}

\author[Tiago Carvalho,  Leonardo Serantola and Bruno S. Rangel]{Tiago Carvalho$^1$, Leonardo Serantola$^1$ and Bruno de Souza Rangel$^2$}

\address{$^1$IBILCE--UNESP, CEP 15054--000, S. J. Rio Preto, S\~ao Paulo, Brazil}
\address{$^2$UFSCar, CEP 13565--905, S\~ao Carlos, S\~ao Paulo, Brazil}
\email{tiago.carvalho1@unesp.br}
\email{l.serantola@unesp.br}
\email{brunodesouzarangel@gmail.com}

\subjclass[2020]{34C05, 34C07, 37G15.}

\keywords{piecewise smooth vector fields ;  hysteresis ; limit sets.}

\begin{abstract}

The present paper addresses a problem that may be of considerable interest to a broad audience since the systems considered here operate according to a switching protocol involving two distinct dynamical regimes. Starting from an initial condition, the evolution follows a first vector field until a selected state variable $y$ reaches a lower threshold $C_1$. At this moment, the dynamics switches to a second vector field. The second regime remains active until the same variable attains an upper threshold $C_2>C_1$, when the first vector field is restored. This alternating procedure is then repeated indefinitely giving rise to a piecewise smooth vector field.  A complete characterization of the $\omega$-limit sets is obtained for every admissible combination of parameters and all initial condition. The analysis is carried out by combining explicit solutions of the vector fields with geometric arguments and the  first return map.

Beyond the classification of limit sets, the paper describes the global bifurcation structure of the family. As the parameters vary, the system undergoes qualitative transitions between distinct asymptotic regimes, including the birth  and disappearance of periodic orbits, changes in their stability, the occurrence of continuum of periodic trajectories in degenerate situations, and the replacement of bounded dynamics by monotone zig-zag motions or unbounded trajectories. The corresponding bifurcation diagrams provide a complete qualitative description of the asymptotic dynamics of the model.
    
\end{abstract}

\maketitle

\section{Introduction}

The purpose of this paper is to provide a rigorous mathematical framework for the analysis of limit sets in a class of hysteresis-driven control systems that frequently appears in applications. Although such models are routinely employed in practice, a general theory describing their long-term behavior is, to the best of our knowledge, still lacking. Situations of this type arise naturally in medicine, agriculture, economics, engineering, chemistry, biology and several other scientific disciplines.

The systems considered here operate according to a switching protocol involving two distinct dynamical regimes. Starting from an initial condition, the evolution follows a first vector field until a selected state variable $y$ reaches a lower threshold $C_1$. At this moment, the dynamics switches to a second vector field. The second regime remains active until the same variable attains an upper threshold $C_2>C_1$, when the first vector field is restored. This alternating procedure is then repeated indefinitely giving rise to a piecewise smooth vector field. No general mathematical classification of the limit sets associated with this type of mechanism is currently available. Examples motivated by medical applications can be found in \cites{Nature-Gatenby-2017, eLife-Gatenby-2022, CarCunEuzFlo-RWA-2025}, among others.

A substantial body of literature exists for the particular case $C_1=C_2$, which includes the theory of Filippov systems and related discontinuous dynamical models. In recent years, considerable attention has also been devoted to bifurcation phenomena in this last class of piecewise-smooth systems, especially Hopf-like bifurcations and the associated periodic dynamics; see, for instance, \cites{ShortSimp,LongSimp} and the references therein.
 In particular, bifurcation phenomena and the emergence of periodic dynamics in discontinuous and piecewise-smooth systems, for  $C_1=C_2$, have been extensively investigated; see, for example, \cites{ShortSimp, LongSimp}. Most of the available theory concerns systems whose switching occurs on a single manifold. Comparatively little is known when two distinct switching thresholds are introduced, producing a hysteretic mechanism and a nontrivial switching band. The present paper aims to contribute to this direction by providing a classification of the possible limit sets in a simple, yet representative, hysteretic framework.

 In fact, the situation considered here involves a deliberate hysteresis gap, namely $C_1\neq C_2$, and has received considerably less attention. From an applied perspective, this hysteresis may represent, for instance, a recovery interval between consecutive treatment sessions, allowing the system to evolve for a significant amount of time before a new intervention becomes necessary. Such behavior is fundamentally different from the case $C_1=C_2$, where switching may occur after arbitrarily short time intervals, as happens near invisible fold-fold singularities; see \cite{CarBuzTei-JMPA-2014}.

As a first step toward a general theory, we focus on the simplest nontrivial setting. More precisely, we consider a planar system generated by two linear vector fields and two switching boundaries
\begin{equation}\label{eqL}
    L^{\pm}=\left\{(x,y)\in\mathbb{R}^2: y=\pm\mu\right\},
\end{equation}
where $\mu>0$. The strip bounded by these lines defines the hysteresis region in which the switching mechanism takes place (see Figure \ref{figura_0}). More sophisticated scenarios, including nonlinear vector fields, higher-dimensional phase spaces, multiple modes and curved switching manifolds, will be addressed in future investigations.

Our analysis classify the kind of limit sets for each choice on the parameters in a planar piecewise smooth vector field with large hysteresis. More specifically, trajectories may exhibit:

\begin{enumerate}
\item unbounded motion, with at least one coordinate diverging to $\pm\infty$;

\item a monotone \textit{zig-zag} dynamics inside the hysteresis band, for which the $x$-coordinate tends to $\pm\infty$;

\item a convergence (for positive or negative time) to a  periodic  trajectory intersecting  the hysteresis band. In such a case, either an isolated periodic orbit, i.e., a limit cycle, is obtained or there is a continuum periodic orbits. 
\end{enumerate}

Besides providing a complete classification of the possible $\omega$-limit sets, the present work also describes how these asymptotic behaviors change as the parameters vary. More precisely, the obtained bifurcation diagrams characterize the transitions between the different dynamical regimes. Depending on the parameter configuration, periodic orbits may appear or disappear, their stability may change from attracting to repelling, degenerate situations may give rise to continuum of periodic trajectories, and bounded dynamics may be replaced by monotone zig-zag motions inside the hysteresis band or by unbounded solutions. Consequently, the paper provides not only a classification of limit sets but also a global qualitative description of the bifurcation structure associated with this family of hysteretic control systems.

The remainder of the paper is organized as follows. Section \ref{sec2} introduces the class of hysteretic switching systems studied throughout the work and establishes the necessary notation and preliminary concepts. In Section \ref{sec3} we prove the main results and analyze the corresponding asymptotic regimes, including periodic solutions, monotone \textit{zig-zag} motions and unbounded trajectories. Finally, Section \ref{conc} contains concluding remarks and outlines several directions for future research.

\section{Statement of the Main Results}\label{sec2}

In this section we present the main results concerning the dynamics generated by pairs of vector fields acting  above and below of the large hysteresis band. For $\mu >0$, define the regions
\begin{equation*}
    \Sigma^-= \{ (x,y) \in \mathbb{R}^2 : \,   y \leq - \mu\} \,\, , \,\,  \Sigma^+= \{ (x,y) \in \mathbb{R}^2 : \,   y \geq \mu\} \mbox{ and }
\end{equation*}
\begin{equation}\label{equacao faixa de histerese}
	HB = \{ (x,y) \in \mathbb{R}^2 : \,   - \mu \leq y \leq  \mu\}.
\end{equation}
Throughout the paper we denote by $X^+$ the vector field acting on the region $\Sigma^+$ and by $X^-$ the vector field acting on the region $\Sigma^-$. Both of them are acting in the hysteresis band $HB$ given by \eqref{equacao faixa de histerese}. See Figure~\ref{figura_0}.
\begin{figure}[ht]
	\begin{center}
		\begin{overpic}[width=10cm]{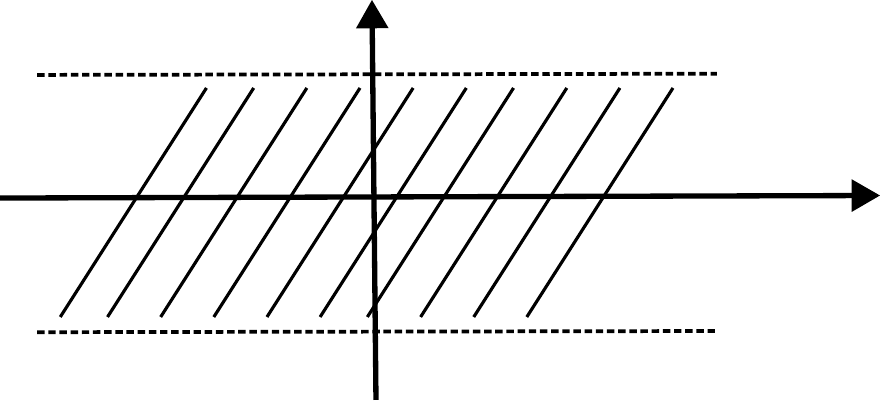}
			\put(88,37){$\mu$}
			\put(85,7){$-\mu$}
			\put(101,25){$x$}
			\put(45,47){$y$}
			\put(50,40){$X^+$}
			\put(50,2){$X^-$}
			\put(75,15){$\mbox{hysteresis band } HB$}
		\end{overpic}
	\end{center}
	\caption{The  configuration studied with an hysteresis band $HB$ between  $y=-\mu$ and $y=\mu$. The vector field $X^+$ acts on the region $\Sigma^+$ and the vector field $X^-$ acts on the region $\Sigma^-$.}\label{figura_0}
\end{figure}

\begin{remark}
	\textbf{IMPORTANT CONVENTION:} In this paper we will consider that when an initial condition of a solution belongs to $HB$, then the vector field to be considered is $X^-$. Physically, we are thinking that in $HB$ the ''treatment'' $X^+$ is not applied. An analogous approach can be done considering the opposite case. 
\end{remark}

\begin{remark}
	We denote by
\[
\overline{\mathbb{R}}
:=
\mathbb{R}\cup\{-\infty,+\infty\}
\]
the extended real line. The extended plane is defined as
\[
\overline{\mathbb{R}}^2
:=
\overline{\mathbb{R}}\times\overline{\mathbb{R}}.
\]
    Consider the vector field $X=(X^+,X^-)$ and an arbitrary initial condition $(x_0,y_0)$. Let us define the $\omega$-limit set of the trajectory through $(x_0,y_0)$ by
\[
\omega\!\left(x_0,y_0\right)
=
\left\{
(\omega_1,\omega_2)\in\overline{\mathbb{R}}^2 :
\lim_{n\to\infty}
X\!\left(t_n;x_0,y_0\right)
=
(\omega_1,\omega_2)
\text{ for  }
t_n\to+\infty
\right\}.
\] 
\end{remark}

Throughout the paper, on the region $\Sigma^+$ we consider the vector field
\begin{equation}\label{EqX+}
	X^+ = (a_1,b_1 x), \quad a_1^2 + b_1^2 \neq 0.
\end{equation}
In this situation, when $a_1b_1\neq 0$,  the vector field $X^+$ presents a fold point over $(0,\mu)$. On the region $\Sigma^-$ we consider a constant vector field
\begin{equation}\label{EqX-}
X^- = (a_2,b_2), \quad a_2,b_2\in\mathbb{R} \quad \textrm{and} \quad a_2^2 + b_2^2 \neq0.
\end{equation}

\begin{theorem}\label{theorem_1} Following the previous convention, consider the concatenated piecewise smooth vector field $Z=(X^+,X^-,HB)$, where  $X^+,X^-,HB$ are given by \eqref{EqX+}, \eqref{EqX-} and \eqref{equacao faixa de histerese}, respectively. 
Given an arbitrary initial condition $(x_0,y_0)$, the  $\omega$-limit set of a trajectory passing through this point is one of the following topological cases:

$\bullet$ $
\{(\pm\infty,\pm\infty)\}$;

$\bullet$ either $
\{(\pm\infty,M)\}$ or $
\{(M,\pm\infty)\}$, with $M$ being a constant. The value of $M$ is determined in the proof of the theorem according to the values of $a_1,a_2,b_1,b_2$;

$\bullet$ limit cycle intersecting $HB$.
 Moreover, the stability of limit cycle is completely determined by the visibility of the fold point of $X^+$, i.e., the sign of $a_1b_1$;

$\bullet$ non-isolated periodic orbit intersecting $HB$. 

$\bullet$  either $
\{(\pm\infty,\mbox{ zig-zag })\}$ or $
\{(\mbox{ zig-zag },\pm\infty)\}$, with the zig-zag occurring inside $HB$.
\end{theorem}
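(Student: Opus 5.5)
The plan is to integrate both fields explicitly and reduce the long-term behaviour to a one-dimensional first return map on one of the switching lines. The flow of $X^-$ is $x(t)=x_0+a_2t$, $y(t)=y_0+b_2t$. For $X^+$ we have $x(t)=x_0+a_1t$, and when $a_1\neq0$ the orbits are the parabolas
\[
y-\tfrac{b_1}{2a_1}x^2=\mathrm{const}.
\]
When $a_1=0$ the orbits of $X^+$ are vertical lines traversed with speed $b_1x_0$. By the convention, a trajectory starting in $HB$ or in $\Sigma^-$ follows $X^-$ until it reaches $y=-\mu$; from there it follows $X^+$ until it reaches $y=\mu$, and then switches back to $X^-$.

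The first step is to dispose of the cases in which a trajectory switches only finitely many times. Suppose $b_2\ge0$, so $X^-$ never brings $y$ down to $-\mu$ from above. If $b_2>0$, or if $b_2=0$ and $a_2\neq0$, the orbit eventually moves along a straight line. This gives $\omega=\{(\pm\infty,\pm\infty)\}$ when $a_2b_2\neq0$, $\{(M,+\infty)\}$ with $M=x$ when $a_2=0$, and $\{(\pm\infty,M)\}$ with $M=y$ constant when $b_2=0$. Now suppose $b_2<0$, so the orbit does reach $L^-$ at some point $(x_1,-\mu)$. We then follow $X^+$ from there. If $X^+$ never reaches $L^+$ from $(x_1,-\mu)$, the explicit parabola or line gives the first two items, with $M$ read off from the formulas. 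This happens, for example, when $a_1=0$ and $b_1x_1\le0$, or when the parabola opens downward with vertex below $\mu$. Otherwise $X^+$ carries $(x_1,-\mu)$ to a point $(x_2,\mu)$, and $X^-$ carries that point back to $(x_2-2\mu a_2/b_2,\,-\mu)$.

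In the remaining cases we build the return map $P$ on $L^-$ as the composition of the $X^+$-transit $x_1\mapsto x_2$ with the translation $x_2\mapsto x_2+\delta$, where $\delta=-2\mu a_2/b_2$. For $a_1\neq0$ the transit comes from $\tfrac{b_1}{2a_1}(x_2^2-x_1^2)=2\mu$, which gives
\[
x_2=\operatorname{sgn}(a_1)\sqrt{x_1^2+4\mu a_1/b_1},
\]
with the branch fixed by the direction of travel. Each of the possible transits is a continuous monotone map, so $P$ is an explicit monotone function of a single variable. Its fixed points are the periodic orbits crossing $HB$.

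If $P-\mathrm{id}$ vanishes identically on an interval, we obtain a continuum of periodic orbits. This happens when $b_1=0$ (pure translation, $x_2=x_1+4\mu a_1/(\ldots)$) or when $a_1=0$, in which case $P(x)=x+\delta$ and $\delta=0$ iff $a_2=0$. If $P$ has isolated fixed points, they are limit cycles. Solving $x^2+4\mu a_1/b_1=(x-\delta)^2$ gives a single candidate, and we compute $P'$ there. Since $P'(x)=x/\sqrt{x^2+4\mu a_1/b_1}$ up to sign, we get $|P'|<1$ exactly when $a_1b_1>0$ and $|P'|>1$ when $a_1b_1<0$. This is the claimed dependence of stability on the visibility of the fold.

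If $P$ has no fixed point and is defined on all the iterates, then $P^n(x_1)$ is monotone and unbounded. The orbit then oscillates across $HB$ while $x\to\pm\infty$, which is the zig-zag case. In the limit cycle case, monotonicity of $P$ gives convergence of $P^n$ to the fixed point on its basin. On the other side of the fixed point, iterates escape either to zig-zag or to the finite-switching cases already treated.

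The main obstacle will be the bookkeeping in the return-map case. One has to track the sign branches in the $X^+$ transit, which differ according to whether the arc passes around the fold $(0,\mu)$ or not. One also has to determine the domain of $P$, since some points of $L^-$ have parabolas that miss $L^+$, and show that iterates either stay in this domain forever or leave it, landing in one of the unbounded cases. I would handle this by splitting on the signs of $a_1,b_1,a_2,b_2$ separately and checking each sign pattern against the explicit formula for $P$.
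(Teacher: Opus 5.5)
Your switching rule is reversed, and the error propagates through the whole argument. In $Z$ the field $X^+$ acts on $\Sigma^+$ and $X^-$ acts on $\Sigma^-$. The relay therefore has to be: $X^-$ stays active (in $\Sigma^-$ and $HB$) until $y$ \emph{rises} to $\mu$, and $X^+$ stays active (in $\Sigma^+$ and $HB$) until $y$ \emph{falls} to $-\mu$. This is what the paper uses. The section is $L^+$, the $X^+$-arc goes from $(x_1,\mu)$ down to $L^-$, and the $X^-$-segment climbs back in time $2\mu/b_2$, so
\[
P(x)=\frac{2a_2\mu}{b_2}-\frac{\sqrt{b_1(b_1x^2-4a_1\mu)}}{b_1},\qquad x_2^2=x_1^2-\frac{4\mu a_1}{b_1}.
\]
Under your rule, $X^-$ would be the active field in $\Sigma^+$ whenever $b_2>0$. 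Consequently your dichotomy on $b_2$ is inverted. For $b_2\le 0$ an orbit in the $X^-$ phase never switches again. For $b_2>0$ it always reaches $L^+$, and precisely these cases produce zig-zags, continua and limit cycles. For example, for $a_1>0$, $b_1<0$, $a_2<0$, $b_2>0$ (Case 1.2.4), $Z$ has a globally attracting cycle, yet your first step predicts straight-line escape to $(-\infty,+\infty)$.

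The stability criterion is also reversed. With the correct transit, $|P'(x^*)|=|x^*|/\sqrt{(x^*)^2-4\mu a_1/b_1}<1$ iff $a_1b_1<0$, i.e.\ the cycle attracts iff the fold is invisible (Lemma 3). You instead get attraction iff $a_1b_1>0$. Your computations are internally consistent. What you analysed is, up to $y\mapsto -y$, the system $Z$ with $(b_1,b_2)$ replaced by $(-b_1,-b_2)$. That is why the list of behaviours looks plausible while every parameter assignment and the stability sign are flipped. You also never treat initial conditions in $\Sigma^+$, where $X^+$ must be followed first; this is where the separatrices $R_1^-$, $R_2^+$ of the visible case enter.

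Even with the rule corrected, two steps need repair. First, the transits are not all monotone. When every point of $L^+$ reaches $L^-$ (invisible fold), $x_1\mapsto -\sqrt{R(x_1)}/b_1$ is even in $x_1$, so $P$ is unimodal and $P'(x^*)$ can be negative. For instance, $a_1=\mu=b_2=1$, $b_1=-1$, $a_2=-10$ gives $x^*=-9.9$ with $P'(x^*)<0$. Hence ``monotonicity gives convergence'' is not available. You must either exclude $2$-periodic points (the role of Lemma 4), or use $|P'|<1$ on the whole domain to get $|P(x)-x^*|<|x-x^*|$ for $x\neq x^*$ and conclude by compactness. Second, $b_1=0$ gives no continuum: $X^+=(a_1,0)$ is horizontal and never crosses $HB$, so there is no transit at all (the paper excludes this case). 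The only continua arise for $a_1=a_2=0$ with $b_2>0$ (Cases 1.1.3 and 1.1.11).
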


\begin{corollary}
The bifurcation diagrams presented throughout Section \ref{sec3} provide a complete qualitative description of the asymptotic dynamics of the concatenated piecewise smooth vector field $Z=(X^+,X^-,HB)$, where  \linebreak $X^+,X^-,HB$ are given by \eqref{EqX+}, \eqref{EqX-} and \eqref{equacao faixa de histerese}, respectively. 

More precisely, as the parameters vary, the system exhibits the following qualitative transitions:

\begin{enumerate}
\item creation and disappearance of isolated periodic orbits;

\item changes in the stability of periodic orbits from attracting to repelling, and conversely;

\item occurrence of degenerate parameter configurations for which a continuum of periodic trajectories exists;

\item disappearance of periodic dynamics, giving rise to monotone zig-zag trajectories confined to the hysteresis band;

\item transitions between bounded dynamics and trajectories escaping to infinity.
\end{enumerate}

These transitions are summarized by the bifurcation diagrams presented in Figures \ref{figura_01}, \ref{figura_02}, \ref{figura_3.1}, \ref{figura_3.2}, \ref{figura_3.3} and \ref{figura_3.4} which completely organize the parameter regions associated with each qualitative dynamical behavior.
\end{corollary}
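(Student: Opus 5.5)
The plan is to obtain the corollary from the case-by-case analysis behind Theorem~\ref{theorem_1}. The corollary adds nothing to the list of possible $\omega$-limit sets. What it adds is that those cases fit together into parameter regions, and that crossing the boundaries between regions produces exactly the five transitions listed.

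Since $X^+=(a_1,b_1x)$ and $X^-=(a_2,b_2)$ have explicit flows, everything is governed by a few parameters:
\begin{itemize}
\item the sign of $a_1b_1$, which decides whether the fold at $(0,\mu)$ is visible or invisible;
\item the sign of $b_2$, which decides whether $X^-$ reaches $L^-$ or instead escapes to $y\to+\infty$ in a straight line;
\item the sign of $a_2$ and the ratio $a_2/b_2$, which give the horizontal drift in $HB\cup\Sigma^-$;
\item the degenerate loci $a_1=0$, $b_1=0$, $a_2=0$, $b_2=0$.
\end{itemize}
I would partition parameter space by these signs. These are the regions drawn in Figures~\ref{figura_01}, \ref{figura_02}, \ref{figura_3.1}--\ref{figura_3.4}.

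In each region I would use the explicit first return map to $L^+$. The $X^+$ arc is a parabola $y=\mu+\tfrac{b_1}{2a_1}(x^2-x_0^2)$, so it sends $x_0$ to $-x_0$ whenever it returns to $L^+$. The $X^-$ passage from $L^+$ down to $L^-$ and back up shifts $x$ by a fixed amount coming from the straight-line flow of $X^-$. Composing the two gives an affine return map $P(x)=-x+\delta$, possibly restricted to a half-line, with $\delta$ depending on $a_1,b_1,a_2,b_2,\mu$. From this map I would read off, region by region:
\begin{itemize}
\item fixed points of $P$, which are isolated periodic orbits;
\item their stability, determined by whether the fold is visible, i.e.\ by the sign of $a_1b_1$;
\item the identity case, in which every point is fixed and there is a continuum of periodic orbits;
\item the absence of returns, which gives either zig-zag motion or escape.
\end{itemize}

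The transitions then come out as follows.
\begin{enumerate}
\item Crossing a locus where the fixed point leaves the domain of definition of $P$, or where $\delta$ changes admissibility, creates or destroys the isolated orbit. This gives item~(1).
\item Crossing $a_1b_1=0$ between the visible and invisible cases reverses stability. This gives item~(2), with the stability dictated by the fold visibility as in Theorem~\ref{theorem_1}.
\item On the codimension-one set where $P$ becomes the identity, or its square does, there is a continuum of periodic trajectories. This gives item~(3).
\item Where the $X^+$ arcs leave $L^+$ without returning, so that the trajectory alternates between $L^+$ and $L^-$ with a net monotone drift in $x$, the motion is a zig-zag inside $HB$. This gives item~(4).
\item Where $b_2\ge 0$, or $b_1=0$ with $a_1\neq0$, the trajectory escapes to infinity. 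This gives item~(5).
\end{enumerate}

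Finally, I would check that the regions and their boundary loci cover all admissible $(a_1,b_1,a_2,b_2)$, with $\mu>0$ acting as a scaling only. This shows that the diagrams are complete.

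The main obstacle is completeness at the degenerate boundaries, where several sign conditions vanish simultaneously. There the return map is not defined on all of $L^+$, and one must confirm that no behavior outside the listed five occurs. I would handle each such codimension-two point separately using the explicit solutions.
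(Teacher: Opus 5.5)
The paper proves this corollary only by the case-by-case analysis of Section~\ref{sec3}. Your outline follows the same route: split parameter space by signs, compute a return map on $L^+$, and read off each region. The gap is that the return map you propose is wrong. The error sits exactly where items (1), (2) and (4) come from.

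\textbf{The return map.} Under the switching protocol, $X^+$ is not switched off when its orbit re-crosses $L^+$. It stays active in $HB$ until $y=-\mu$. So the first half-map is the passage $L^+\to L^-$,
\[
x\longmapsto -\frac{1}{b_1}\sqrt{b_1\left(b_1x^2-4a_1\mu\right)},
\]
not $x_0\mapsto -x_0$. The constant field $X^-$ cannot go ``down and back up.'' It carries $L^-$ to $L^+$ in time $2\mu/b_2$, which requires $b_2>0$, and shifts $x$ by $2a_2\mu/b_2$. The return map is therefore the nonlinear map \eqref{eqP}, not $-x+\delta$.

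This is not cosmetic. An affine map of slope $-1$ satisfies $P^2=\mathrm{id}$, so every point admitting two returns would be periodic. There would be no isolated cycles, no attraction or repulsion, and no monotone drift. Your claim that stability is governed by the sign of $a_1b_1$ is therefore imported from the theorem, not derived from your map. The missing ingredient is
\[
|P'(x)|=\frac{|x|}{\sqrt{x^2-4a_1\mu/b_1}},
\]
which is $<1$ exactly when $a_1b_1<0$. This is Lemma~\ref{lema3}, evaluated at $x^*=\frac{a_1b_2}{a_2b_1}+\frac{a_2\mu}{b_2}$.

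You also need the non-existence results. Lemma~\ref{lema2} shows that $P(x)-x$ has the sign of $a_1$ when $a_1a_2>0$, and a direct computation shows there is no fixed point when $a_2=0$. These produce the zig-zag regimes. In the paper's diagrams the cycle is born or lost across the sign boundaries $a_2=0$, $b_2=0$ and $a_1=0$. The locus $R(x^*)=0$ that your item (1) relies on is explicitly set aside there as degenerate.

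\textbf{Other errors.} In item (5) the sign is reversed. The condition $b_2>0$ is precisely what brings trajectories back from $L^-$ to $L^+$, so it enables cycles and zig-zags. Escape through $\Sigma^-$ occurs for $b_2\le 0$: to $y\to-\infty$ when $b_2<0$, and horizontally when $b_2=0$. Escape through $\Sigma^+$ happens along $X^+$, either for the visible fold beyond $R_1^-$ or $R_2^+$, or for $a_1=0$ with $b_1x>0$. Moreover, $b_1=0$ is excluded from these diagrams, since it is treated in the earlier cited work, so it cannot enter your completeness check.

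In item (3), the continuum of periodic orbits does not lie on a codimension-one set where $P$ or $P^2$ is the identity. It occurs only for $a_1=a_2=0$, $b_2>0$ (Cases 1.1.3 and 1.1.11). There $X^+$ is vertical and $P$ is the identity on a half-line of $L^+$. Lemma~\ref{lema4} rules out period-two points whenever $x^*$ exists.
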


\section{Proof of the Main Results}\label{sec3}

\subsection{Proof of Theorem \ref{theorem_1}}

Let us take  the vector field $X^+=\left(a_1, b_1 x\right)$ and $X^-=\left(a_2, b_2\right)$, in regions $\Sigma^+$ and $\Sigma^-$, respectively. See Figure \ref{figura_7} for an example.

Note that the case $b_1=0$ was studied in the previous paper \cite{CarRanSer-Histerese-constante-2026}. Then, let us consider  $b_1\neq0$.

\begin{figure}[ht]
	\begin{center}
		\begin{overpic}[width=6cm]{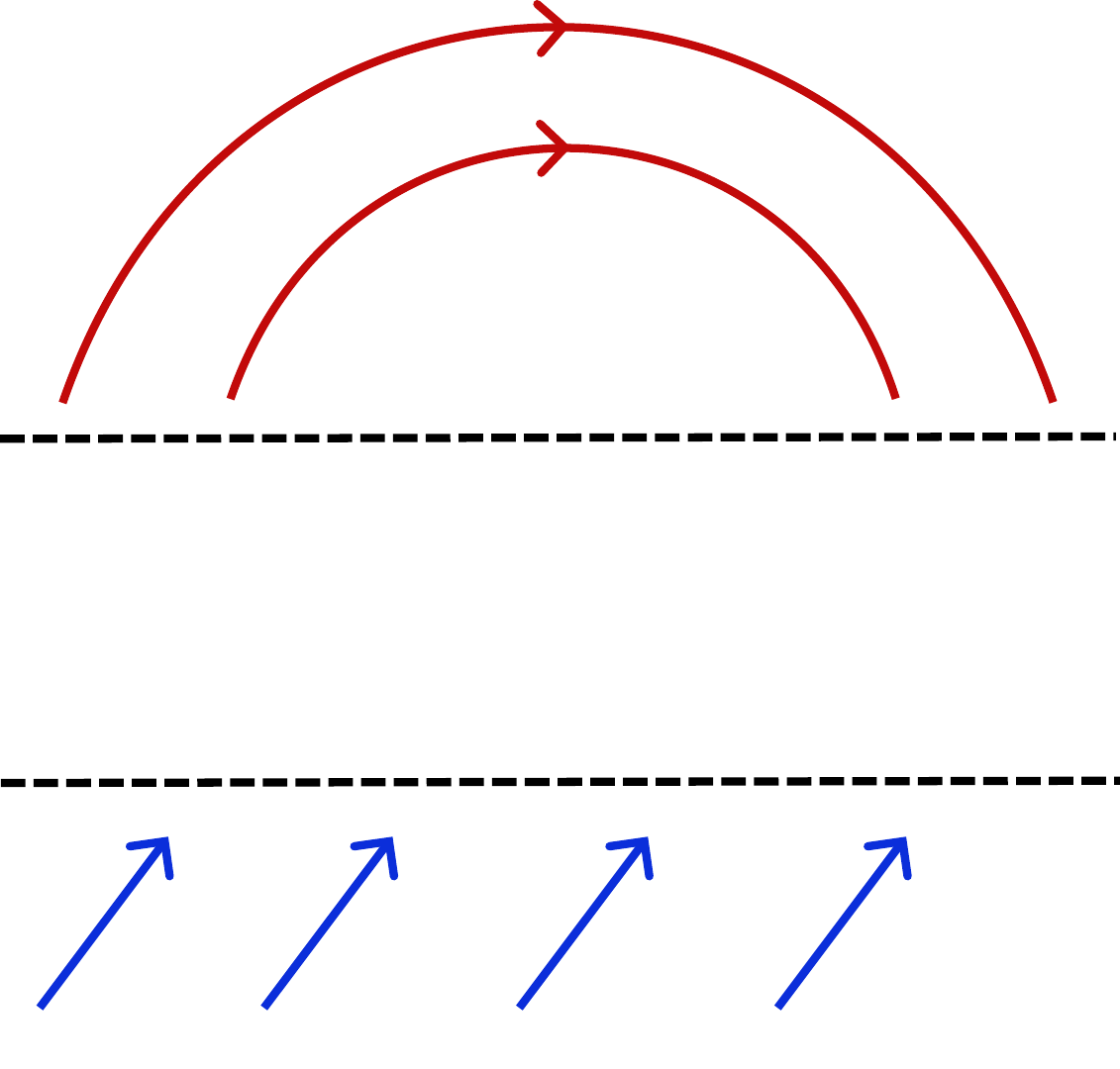}
			\put(106,55){$\mu$}
			\put(103,24){$-\mu$}
			\put(-40,67){$X^+=\left(a_1, b_1 x\right)$}
			\put(-40,10){$X^-=\left(a_2, b_2\right)$}
		\end{overpic}
	\end{center}
\caption{Example of an invisible fold point in the upper region and a constant vector field in the down region of the hysteresis band with $b_1<0$ and $a_1,\, a_2,\, b_2>0$.}\label{figura_7}
\end{figure}

\subsubsection{\textbf{Case 1.1:}} Here we will analyze the dynamics for all cases where $a_1=0$ and $b_1>0$. See Figure \ref{figura_7.1} for the phase portraits corresponding to these cases and Figure \ref{figura_01} for the bifurcation diagram in the variables $a_2$ and $b_2$. 

In the Case 1.1.$i$, $i=1,\cdots,16$, every point of the form $(0,y_0)$, $y_0\geq0$, is an equilibrium point.

\vspace{0.5cm}

$\bullet$ \textbf{Case 1.1.1: $a_1=0, b_1>0, a_2 > 0, b_2 = 0$.} In this case, taking an arbitrary initial condition $(x_0,y_0)$, for $y_0 > \mu$, the trajectory has $\omega$-limit set $\{(+\infty,-\mu)\}$ when $x_0 < 0$ and has $\omega$-limit set $\{(x_0,+\infty)\}$ when $x_0 > 0$. Taking an arbitrary initial condition $(x_0,y_0)$, for $y_0 \leq \mu$, the trajectory has $\omega$-limit set $\{(+\infty,y_0)\}$.

$\bullet$ \textbf{Case 1.1.2: $a_1=0, b_1>0, a_2 > 0, b_2>0$.} In this case, consider the half-line 
\begin{equation}\label{eqR_1}
    R_1=\left\{(x,y)\in\mathbb{R}^2; \,y=\dfrac{b_2}{a_2}x+\mu, \, x<0\right\},
\end{equation}
and the numbers
\begin{equation}\label{eqn_1}
    n_1:=\dfrac{|x_0|}{\left|\dfrac{2a_2\mu}{b_2}\right|} \quad \textrm{and} \quad n_2:=\dfrac{|x_0|}{\left|-\dfrac{2a_2(y_0-2\mu)}{b_2}\right|}.
\end{equation}
Taking an initial condition $(x_0,y_0)$ above the half-line $R_1$, with $y_0\geq\mu$ and $x_0<0$, the trajectory has a monotone zig-zag behavior restricted to HB region, with
\begin{equation}\label{eqN_1}
    N_1:=\left\lceil n_1\right\rceil+1\geq2
\end{equation}
intersections with the line $y=\mu$ and has $\omega$-limit set
$$
\left\{\left(x_0 +\dfrac{2 a_2\mu}{b_2}(N_1-1),+\infty\right)\right\}. 
$$
See Figure \ref{fig-detalhe1} for details. Taking an initial condition $(x_0,y_0)$ above the half-line $R_1$, with $y_0<\mu$ and $x_0<0$, the trajectory has a monotone zig-zag behavior restricted to HB region, with 
\begin{equation}\label{eqN_2}
    N_2:=\left\lceil n_2\right\rceil+1\geq2
\end{equation}
intersections with the line $y=\mu$ and has $\omega$-limit set
$$
\left\{\left(x_0 +\dfrac{2 a_2\mu}{b_2}N_2,+\infty\right)\right\}.
$$
See Figure \ref{fig-detalhe2} for details. Now, taking an initial condition $(x_0,y_0)$ below the half-line $R_1$, the trajectory has $\omega$-limit set
$$
\left\{\left(x_0+\dfrac{a_2 (-y_0 + \mu)}{b_2},+\infty\right)\right\}
$$
when $y_0 \leq \mu$. Taking an initial condition $(x_0,y_0)$, with $y_0>\mu$ and $x_0>0$, the trajectory has $\omega$-limit set $\{(x_0,+\infty)\}$.

\noindent Finally, taking an initial condition $(x_0,y_0)\in R_1$ or a initial condition $(x_0,y_0)$ such that $n_1\in\mathbb{N}$ or $n_2\in\mathbb{N}$, the trajectory has $\omega$-limit set $\{(0,\mu)\}$.

\begin{figure}[ht]
	\begin{center}
		\begin{overpic}[width=5cm]{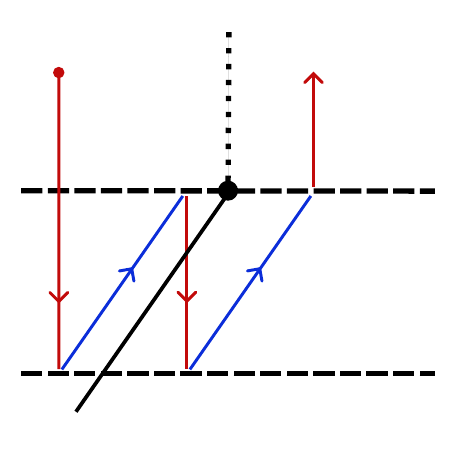}
			\put(0,90){$(x_0,y_0)$}
            \put(6,6){$R_1$}
            \put(100,58){$\mu$}
            \put(98,17){$-\mu$}
		\end{overpic}
	\end{center}
	\caption{An example of the dynamics taking an initial condition $(x_0,y_0)$ above the half-line $R_1$, with $y_0\geq\mu$ and $x_0<0$.}\label{fig-detalhe1}
\end{figure}

\begin{figure}[ht]
	\begin{center}
		\begin{overpic}[width=5cm]{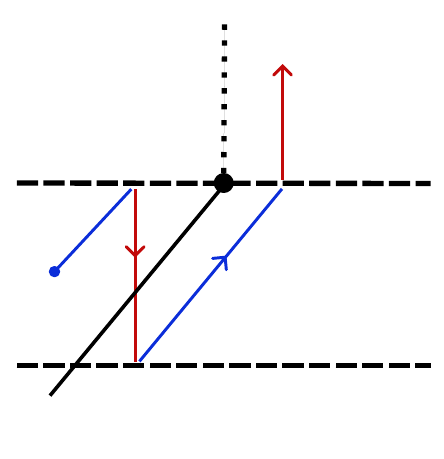}
			\put(-2,32){$(x_0,y_0)$}
            \put(6,6){$R_1$}
            \put(100,58){$\mu$}
            \put(98,17){$-\mu$}
		\end{overpic}
	\end{center}
	\caption{An example of the dynamics taking an initial condition $(x_0,y_0)$ above the half-line $R_1$, with $y_0<\mu$ and $x_0<0$.}\label{fig-detalhe2}
\end{figure}

$\bullet$ \textbf{Case 1.1.3: $a_1=0, b_1>0, a_2 = 0, b_2>0$.} In this case, the HB region is entirely composed of periodic orbits, for $x<0$. So, taking an arbitrary initial condition $(x_0,y_0)$, for $x_0<0$, the trajectory has a periodic orbit as an $\omega$-limit set. Taking an initial condition $(0,y_0)$, the trajectory has $\omega$-limit set $\{(0,\mu)\}$. Now, taking an arbitrary initial condition $(x_0,y_0)$, for $x_0>0$, the trajectory has $\omega$-limit set $\{(x_0,+\infty)\}$.

$\bullet$ \textbf{Case 1.1.4: $a_1=0, b_1>0, a_2 < 0, b_2>0$.} In this case, consider the half-line  
\begin{equation}\label{eqR_2}
    R_2=\left\{(x,y)\in\mathbb{R}^2; \,y=\dfrac{b_2}{a_2}x+\mu, \, x>0\right\}.
\end{equation}
Taking an initial condition $(x_0,y_0)$ with $y_0>\mu$ and $x_0<0$ or an initial condition $(x_0,y_0)$ below the half-line $R_2$, the trajectory has a monotone zig-zag behavior restricted to HB region, with the $x$-coordinate going to $- \infty$. Now, taking an initial condition $(x_0,y_0)$ above the half-line $R_2$, with $x_0>0$, the trajectory has $\omega$-limit set $\{(x_0,+\infty)\}$ when $y_0>\mu$ and the trajectory has $\omega$-limit set
$$
\left\{\left(x_0+\dfrac{a_2 (-y_0 + \mu)}{b_2},+\infty\right)\right\}
$$
when $y_0 \leq \mu$.

\noindent Finally, taking an initial condition $(x_0,y_0)\in R_2$, the trajectory has $\omega$-limit set $\{(0,\mu)\}$.

$\bullet$ \textbf{Case 1.1.5: $a_1=0, b_1>0, a_2<0, b_2=0$.} In this case, taking an arbitrary initial condition $(x_0,y_0)$, for $y_0 > \mu$, the trajectory has $\omega$-limit set $\{(-\infty,-\mu)\}$ when $x_0 < 0$ and has $\omega$-limit set $\{(x_0,+\infty)\}$ when $x_0 > 0$. Taking an arbitrary initial condition $(x_0,y_0)$, for $y_0 \leq \mu$, the trajectory has $\omega$-limit set $\{(-\infty,y_0)\}$.

$\bullet$ \textbf{Case 1.1.6: $a_1=0, b_1>0, a_2<0, b_2<0$.} In this case, taking an arbitrary initial condition $(x_0,y_0)$, for $y_0 > \mu$, the trajectory has $\omega$-limit set $\{(-\infty,-\infty)\}$ when $x_0 < 0$ and has $\omega$-limit set $\{(x_0,+\infty)\}$ when $x_0 > 0$. Taking an arbitrary initial condition $(x_0,y_0)$, for $y_0 \leq \mu$, the trajectory has $\omega$-limit set $\{(-\infty,-\infty)\}$.

$\bullet$ \textbf{Case 1.1.7: $a_1=0, b_1>0, a_2=0, b_2<0$.} In this case, taking an arbitrary initial condition $(x_0,y_0)$, for $y_0 > \mu$, the trajectory has $\omega$-limit set $\{(x_0,-\infty)\}$ when $x_0 < 0$ and has $\omega$-limit set $\{(x_0,+\infty)\}$ when $x_0 > 0$. Taking an arbitrary initial condition $(x_0,y_0)$, for $y_0 \leq \mu$, the trajectory has $\omega$-limit set $\{(x_0,-\infty)\}$.

$\bullet$ \textbf{Case 1.1.8: $a_1=0, b_1>0, a_2>0, b_2<0$.} In this case, taking an arbitrary initial condition $(x_0,y_0)$, for $y_0 > \mu$, the trajectory has $\omega$-limit set $\{(+\infty,-\infty)\}$ when $x_0 < 0$ and has $\omega$-limit set $\{(x_0,+\infty)\}$ when $x_0 > 0$. Taking an arbitrary initial condition $(x_0,y_0)$, for $y_0 \leq \mu$, the trajectory has $\omega$-limit set $\{(+\infty,-\infty)\}$.

\begin{figure}[ht]
	\begin{center}
		\begin{overpic}[width=7cm]{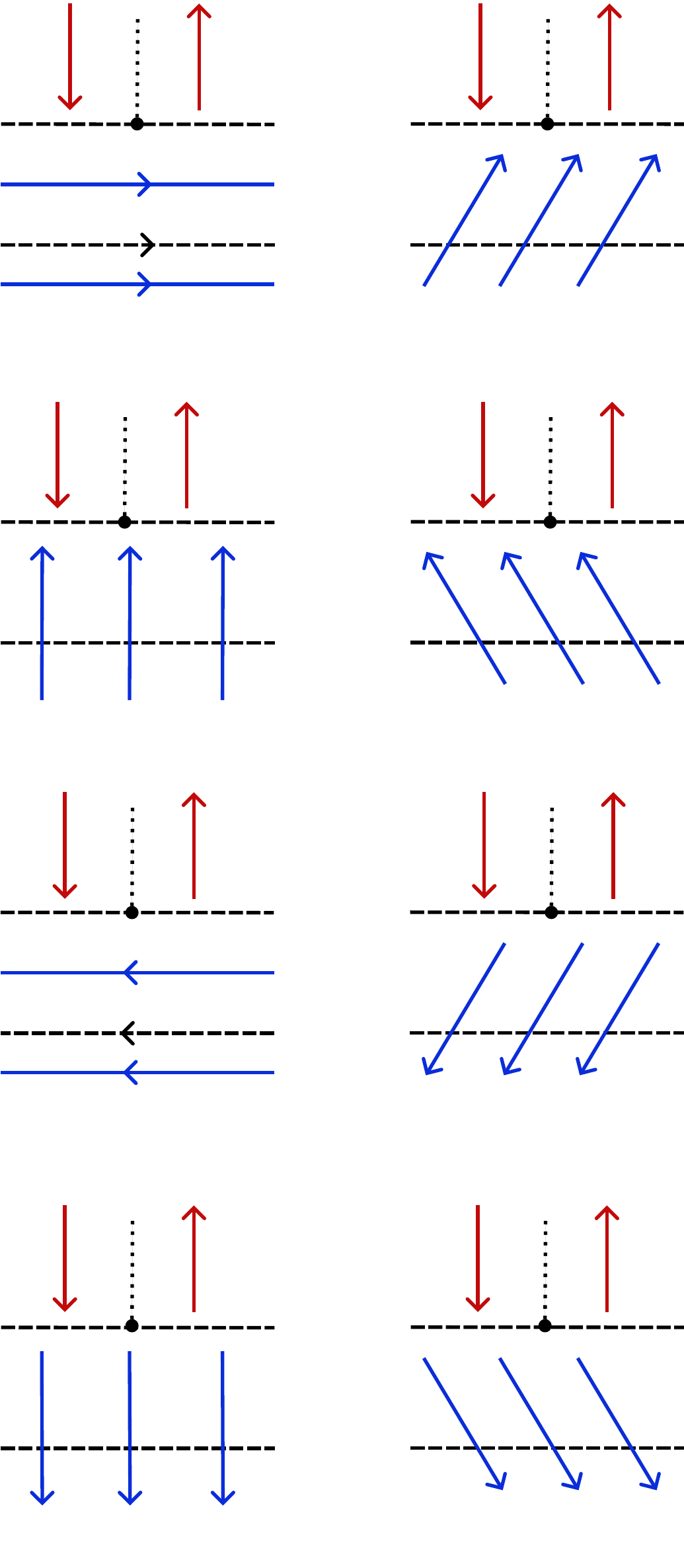}
			\put(4,77){Case $1.1.1$}
            \put(29,77){Case $1.1.2$}
            \put(4,52){Case $1.1.3$}
            \put(29,52){Case $1.1.4$}
            \put(4,26){Case $1.1.5$}
            \put(29,26){Case $1.1.6$}
            \put(4,0){Case $1.1.7$}
            \put(29,0){Case $1.1.8$}
		\end{overpic}
	\end{center}
	\caption{Dynamics for condition $a_1=0$ and $b_1>0$.}\label{figura_7.1}
\end{figure}

\begin{figure}[ht]
	\begin{center}
		\begin{overpic}[width=9cm]{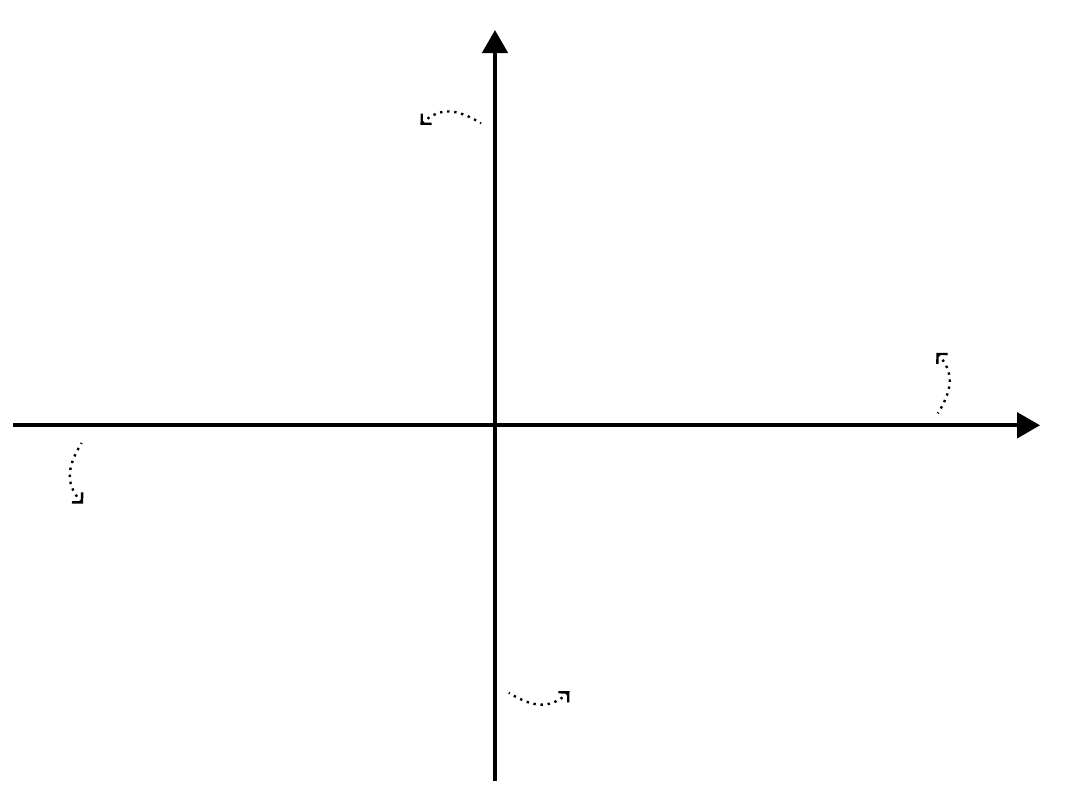}
            \put(97,30){$a_2$}
            \put(48,70){$b_2$}
			\put(75,45){Case $1.1.1$}
            \put(55,55){Case $1.1.2$}
            \put(25,55){Case $1.1.3$}
            \put(5,45){Case $1.1.4$}
            \put(5,20){Case $1.1.5$}
            \put(25,10){Case $1.1.6$}
            \put(55,10){Case $1.1.7$}
            \put(75,20){Case $1.1.8$}
		\end{overpic}
	\end{center}
	\caption{Bifurcation diagram in the variables $a_2$ and $b_2$, for  $a_1=0$, $b_1>0$.}\label{figura_01}
\end{figure}

\vspace{0.5cm}

Here we will analyze the dynamics for all cases where $a_1=0$, $b_1<0$. See Figure \ref{figura_7.2} for the phase portraits corresponding to these cases and Figure \ref{figura_02} for the bifurcation diagram in the variables $a_2$ and $b_2$.

\vspace{0.5cm}

$\bullet$ \textbf{Case 1.1.9: $a_1=0, b_1<0, a_2 > 0, b_2 = 0$.} In this case, taking an arbitrary initial condition $(x_0,y_0)$, for $y_0 > \mu$, the trajectory has $\omega$-limit set $\{(x_0,+\infty)\}$ when $x_0 < \mu$ and has $\omega$-limit set $\{(+\infty,-\mu)\}$ when $x_0 > \mu$. Taking an arbitrary initial condition $(x_0,y_0)$, for $y_0 \leq \mu$, the trajectory has $\omega$-limit set $\{(+\infty,y_0)\}$.

$\bullet$ \textbf{Case 1.1.10: $a_1=0, b_1<0, a_2 > 0, b_2>0$.} In this case, consider the half-line $R_1$ given in \eqref{eqR_1}. Taking an initial condition $(x_0,y_0)$ with $y_0>\mu$ and $x_0>0$ or an initial condition $(x_0,y_0)$ below the half-line $R_1$, the trajectory has a monotone zig-zag behavior restricted to HB region, with the $x$-coordinate going to $+ \infty$. Now, taking an initial condition $(x_0,y_0)$ above the half-line $R_1$, with $x_0<0$, the trajectory has $\omega$-limit set $\{(x_0,+\infty)\}$ when $y_0>\mu$ and the trajectory has $\omega$-limit set
$$
\left\{\left(x_0+\dfrac{a_2 (-y_0 + \mu)}{b_2},+\infty\right)\right\}
$$
when $y_0 \leq \mu$.

\noindent Finally, taking an initial condition $(x_0,y_0)\in R_1$, the trajectory has $\omega$-limit set $\{(0,\mu)\}$.

$\bullet$ \textbf{Case 1.1.11: $a_1=0, b_1<0, a_2 = 0, b_2>0$.} In this case,  HB  is entirely composed of periodic orbits, for $x>0$. So, taking an arbitrary initial condition $(x_0,y_0)$, for $x_0>0$, the trajectory has a periodic orbit as an $\omega$-limit set. Taking an initial condition $(0,y_0)$, the trajectory has $\omega$-limit set $\{(0,\mu)\}$ when $y_0\leq \mu$ and $\{(0,y_0)\}$ when $y_0 > \mu$. Now, taking an arbitrary initial condition $(x_0,y_0)$, for $x_0<0$, the trajectory has $\omega$-limit set $\{(x_0,+\infty)\}$.

$\bullet$ \textbf{Case 1.1.12: $a_1=0, b_1<0, a_2 < 0, b_2>0$.} In this case, consider the half-line $R_2$ given in \eqref{eqR_2} and the numbers $n_1$, $n_2$, $N_1$, $N_2$ given in \eqref{eqn_1}, \eqref{eqN_1} and \eqref{eqN_2}, respectively. Taking an initial condition $(x_0,y_0)$ above the half-line $R_2$, with $y_0\geq\mu$ and $x_0>0$, the trajectory has a monotone zig-zag behavior restricted to HB, with $N_1$ intersections with the line $y=\mu$ and has $\omega$-limit set
$$
\left\{\left(x_0 -\dfrac{2 a_2\mu}{b_2}(N_1-1),+\infty\right)\right\}.
$$
See Figure \ref{fig-detalhe3} for details. Taking an initial condition $(x_0,y_0)$ above the half-line $R_2$, with $y_0<\mu$ and $x_0>0$, the trajectory has a monotone zig-zag behavior restricted to HB region, with $N_2$ intersections with the line $y=\mu$ and has $\omega$-limit set
$$
\left\{\left(x_0 -\dfrac{2 a_2\mu}{b_2}N_2,+\infty\right)\right\}.
$$
See Figure \ref{fig-detalhe4} for details. Now, taking an initial condition $(x_0,y_0)$ below the half-line $R_2$, the trajectory has $\omega$-limit set
$$
\left\{\left(x_0+\dfrac{a_2 (-y_0 + \mu)}{b_2},+\infty\right)\right\}
$$
when $y_0 \leq \mu$. Taking an initial condition $(x_0,y_0)$, with $y_0>\mu$ and $x_0<0$, the trajectory has $\omega$-limit set $\{(x_0,+\infty)\}$.

\noindent Finally, taking an initial condition $(x_0,y_0)\in R_2$ or a initial condition $(x_0,y_0)$ such that $n_1\in\mathbb{N}$ or $n_2\in\mathbb{N}$, the trajectory has $\omega$-limit set $\{(0,\mu)\}$.

\begin{figure}[ht]
	\begin{center}
		\begin{overpic}[width=5cm]{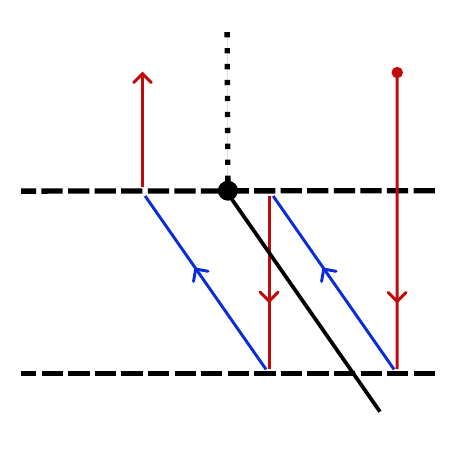}
			\put(75,90){$(x_0,y_0)$}
            \put(85,6){$R_2$}
            \put(-4,58){$\mu$}
            \put(-9,18){$-\mu$}
		\end{overpic}
	\end{center}
	\caption{An example of the dynamics taking an initial condition $(x_0,y_0)$ above the half-line $R_2$, with $y_0\geq\mu$ and $x_0>0$.}\label{fig-detalhe3}
\end{figure}

\begin{figure}[ht]
	\begin{center}
		\begin{overpic}[width=5cm]{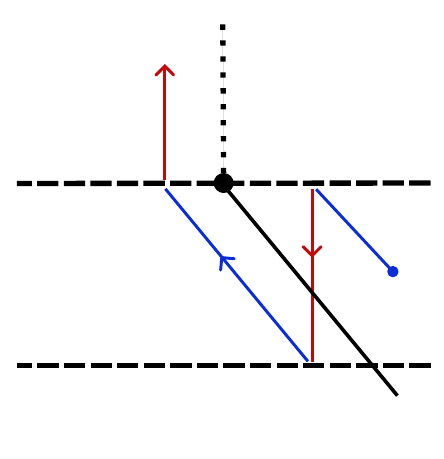}
			\put(75,32){$(x_0,y_0)$}
            \put(85,6){$R_2$}
            \put(-4,58){$\mu$}
            \put(-9,18){$-\mu$}
		\end{overpic}
	\end{center}
	\caption{An example of the dynamics taking an initial condition $(x_0,y_0)$ above the half-line $R_2$, with $y_0<\mu$ and $x_0>0$.}\label{fig-detalhe4}
\end{figure}

$\bullet$ \textbf{Case 1.1.13: $a_1=0, b_1<0, a_2<0, b_2=0$.} In this case, taking an arbitrary initial condition $(x_0,y_0)$, for $y_0 > \mu$, the trajectory has $\omega$-limit set $\{(x_0,+\infty)\}$ when $x_0 < 0$ and has $\omega$-limit set $\{(-\infty,-\mu)\}$ when $x_0 > 0$. Taking an arbitrary initial condition $(x_0,y_0)$, for $y_0 \leq \mu$, the trajectory has $\omega$-limit set $\{(-\infty,y_0)\}$.

$\bullet$ \textbf{Case 1.1.14: $a_1=0, b_1<0, a_2<0, b_2<0$.} In this case, taking an arbitrary initial condition $(x_0,y_0)$, for $y_0 > \mu$, the trajectory has $\omega$-limit set $\{(x_0,+\infty)\}$ when $x_0 < 0$ and has $\omega$-limit set $\{(-\infty,-\infty)\}$ when $x_0 > 0$. Taking an arbitrary initial condition $(x_0,y_0)$, for $y_0 \leq \mu$, the trajectory has $\omega$-limit set $\{(-\infty,-\infty)\}$.

$\bullet$ \textbf{Case 1.1.15: $a_1=0, b_1<0, a_2=0, b_2<0$.} In this case, taking an arbitrary initial condition $(x_0,y_0)$, for $y_0 > \mu$, the trajectory has $\omega$-limit set $\{(x_0,+\infty)\}$ when $x_0 < 0$ and has $\omega$-limit set $\{(x_0,-\infty)\}$ when $x_0 > 0$. Taking an arbitrary initial condition $(x_0,y_0)$, for $y_0 \leq \mu$, the trajectory has $\omega$-limit set $\{(x_0,-\infty)\}$.

$\bullet$ \textbf{Case 1.1.16: $a_1=0, b_1<0, a_2>0, b_2<0$.} In this case, taking an arbitrary initial condition $(x_0,y_0)$, for $y_0 > \mu$, the trajectory has $\omega$-limit set $\{(x_0,+\infty)\}$ when $x_0 < 0$ and has $\omega$-limit set $\{(+\infty,-\infty)\}$ when $x_0 > 0$. Taking an arbitrary initial condition $(x_0,y_0)$, for $y_0 \leq \mu$, the trajectory has $\omega$-limit set $\{(+\infty,-\infty)\}$.

\begin{figure}[ht]
	\begin{center}
		\begin{overpic}[width=7cm]{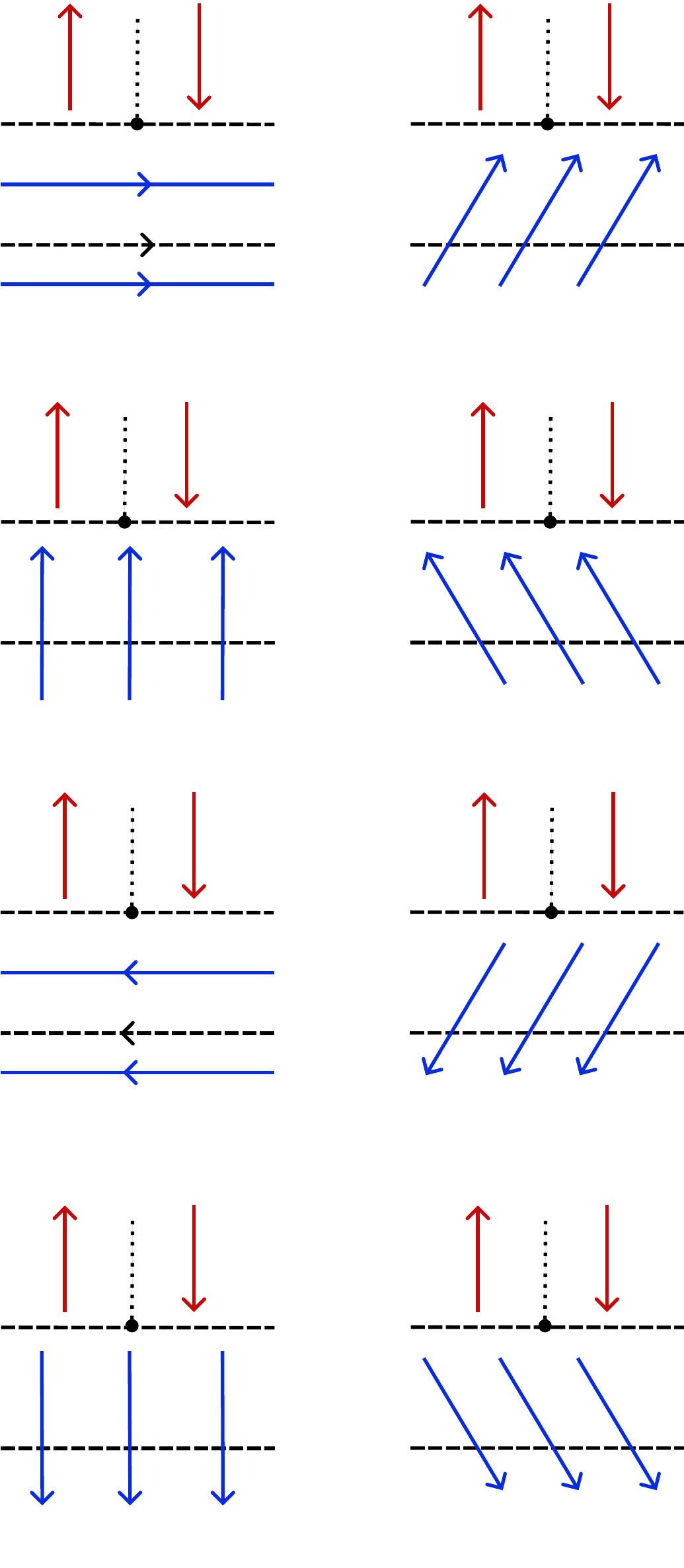}
			\put(4,77){Case $1.1.9$}
            \put(29,77){Case $1.1.10$}
            \put(4,52){Case $1.1.11$}
            \put(29,52){Case $1.1.12$}
            \put(4,26){Case $1.1.13$}
            \put(29,26){Case $1.1.14$}
            \put(4,0){Case $1.1.15$}
            \put(29,0){Case $1.1.16$}
		\end{overpic}
	\end{center}
	\caption{Dynamics for condition $a_1=0$ and $b_1<0$.}\label{figura_7.2}
\end{figure}

\begin{figure}[ht]
	\begin{center}
		\begin{overpic}[width=9cm]{figura_2.1.pdf}
			\put(97,30){$a_2$}
            \put(48,70){$b_2$}
			\put(75,45){Case $1.1.9$}
            \put(55,55){Case $1.1.10$}
            \put(22,55){Case $1.1.11$}
            \put(5,45){Case $1.1.12$}
            \put(5,20){Case $1.1.13$}
            \put(22,10){Case $1.1.14$}
            \put(55,10){Case $1.1.15$}
            \put(75,20){Case $1.1.16$}
		\end{overpic}
	\end{center}
	\caption{Bifurcation diagram in the variables $a_2$ and $b_2$, for  $a_1=0$, $b_1<0$.}\label{figura_02}
\end{figure}

\vspace{0.5cm}

Finally, let us consider $a_1b_1\neq0$. We will divide the analysis into two subcases:

\textit{\textbf{Case 1.2:}} The vector field $X^+$ has a invisible fold point in $(0,\mu)$, i.e., $a_1>0$, $b_1<0$ or $a_1<0$, $b_1>0$.

\textit{\textbf{Case 1.3:}} The vector field $X^+$ has a visible fold point in $(0,\mu)$, i.e., $a_1>0$, $b_1>0$ or $a_1<0$, $b_1<0$.


\subsubsection{\textbf{Case 1.2:}} Here we will analyze the dynamics for all cases where $a_1>0$ and $b_1<0$. See Figure \ref{figura_8} for the phase portraits corresponding to these cases and Figure \ref{figura_3.1} for the bifurcation diagram in the variables $a_2$ and $b_2$.

\vspace{0.5cm}

$\bullet$ \textbf{Case 1.2.1: $a_1>0, b_1<0, a_2 > 0, b_2 = 0$.} In this case, taking an arbitrary initial condition $(x_0,y_0)$, the trajectory has $\omega$-limit set $\{(+\infty,-\mu)\}$ when $y_0 > \mu$ and has $\omega$-limit set $\{(+\infty,y_0)\}$ when $y_0 \leq \mu$.

$\bullet$ \textbf{Case 1.2.2: $a_1>0, b_1<0, a_2 > 0, b_2>0$.} To analyze the dynamics of the trajectories, we will use Poincaré first return map. See section \ref{subsec2}.

$\bullet$ \textbf{Case 1.2.3: $a_1>0, b_1<0, a_2 = 0, b_2>0$.} To analyze the dynamics of the trajectories, we will use Poincaré first return map. See section \ref{subsec2}.

$\bullet$ \textbf{Case 1.2.4: $a_1>0, b_1<0, a_2 < 0, b_2>0$.} To analyze the dynamics of the trajectories, we will use Poincaré first return map. See section \ref{subsec2}.

$\bullet$ \textbf{Case 1.2.5: $a_1>0, b_1<0, a_2<0, b_2=0$.} In this case, taking an arbitrary initial condition $(x_0,y_0)$, the trajectory has $\omega$-limit set $\{(-\infty,-\mu)\}$ when $y_0 > \mu$ and has $\omega$-limit set $\{(-\infty,y_0)\}$ when $y_0 \leq \mu$.

$\bullet$ \textbf{Case 1.2.6: $a_1>0, b_1<0, a_2<0, b_2<0$.} In this case, taking an arbitrary initial condition $(x_0,y_0)$, the trajectory has $\omega$-limit set $\{(-\infty,-\infty)\}$.

$\bullet$ \textbf{Case 1.2.7: $a_1>0, b_1<0, a_2=0, b_2<0$.} In this case, taking an arbitrary initial condition $(x_0,y_0)$, the trajectory has $\omega$-limit set 
$$
\left\{ \left(\sqrt{x_2+\dfrac{2a_1(\mu-y_0)}{b1}},-\infty\right) \right\}
$$ 
when $y_0 > \mu$ and has $\omega$-limit set $\{(x_0,-\infty)\}$ when $y_0 \leq \mu$.

$\bullet$ \textbf{Case 1.2.8: $a_1>0, b_1<0, a_2>0, b_2<0$.} In this case, taking an arbitrary initial condition $(x_0,y_0)$, the trajectory has $\omega$-limit set $\{(+\infty,-\infty)\}$.

\begin{figure}[ht]
	\begin{center}
		\begin{overpic}[width=7cm]{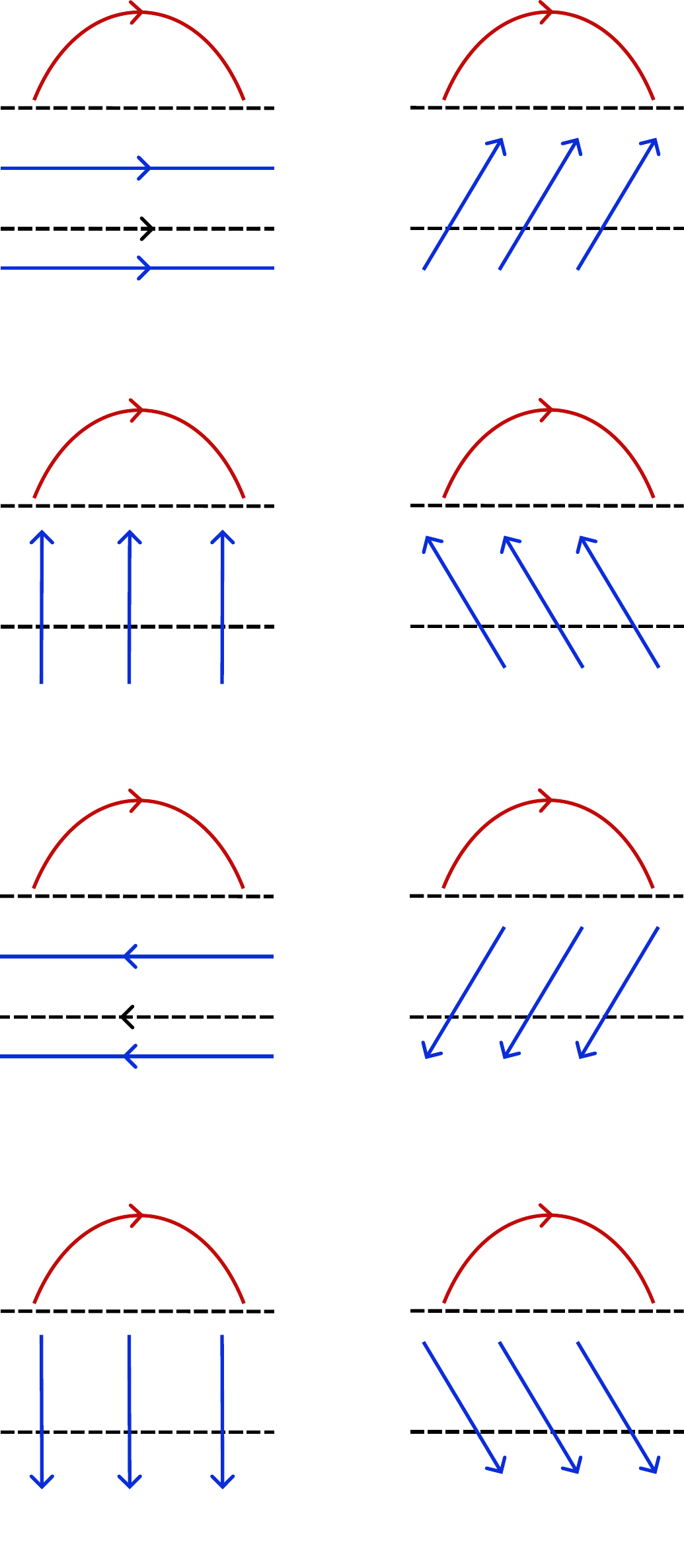}
			\put(4,78){Case $1.2.1$}
            \put(29,78){Case $1.2.2$}
            \put(4,53){Case $1.2.3$}
            \put(29,53){Case $1.2.4$}
            \put(4,28){Case $1.2.5$}
            \put(29,28){Case $1.2.6$}
            \put(4,1){Case $1.2.7$}
            \put(29,1){Case $1.2.8$}
		\end{overpic}
	\end{center}
	\caption{Dynamics for condition $a_1>0$ and $b_1<0$.}\label{figura_8}
\end{figure}

\begin{figure}[ht]
	\begin{center}
		\begin{overpic}[width=9cm]{figura_2.1.pdf}
            \put(97,30){$a_2$}
            \put(48,70){$b_2$}
			\put(75,45){Case $1.2.1$}
            \put(55,55){Case $1.2.2$}
            \put(25,55){Case $1.2.3$}
            \put(5,45){Case $1.2.4$}
            \put(5,20){Case $1.2.5$}
            \put(25,10){Case $1.2.6$}
            \put(55,10){Case $1.2.7$}
            \put(75,20){Case $1.2.8$}
		\end{overpic}
	\end{center}
	\caption{Bifurcation diagram in the variables $a_2$ and $b_2$, for  $a_1>0$, $b_1<0$.}\label{figura_3.1}
\end{figure}

\vspace{0.5cm}

Here we will analyze the dynamics for all cases where $a_1<0$ and $b_1>0$. See Figure \ref{figura_8.1} for the phase portraits corresponding to these cases and Figure \ref{figura_3.2} for the bifurcation diagram in the variables $a_2$ and $b_2$.

\vspace{0.5cm}

$\bullet$ \textbf{Case 1.2.9: $a_1<0, b_1>0, a_2 > 0, b_2 = 0$.} In this case, taking an arbitrary initial condition $(x_0,y_0)$, the trajectory has $\omega$-limit set $\{(+\infty,-\mu)\}$ when $y_0 > \mu$ and has $\omega$-limit set $\{(+\infty,y_0)\}$ when $y_0 \leq \mu$.

$\bullet$ \textbf{Case 1.2.10: $a_1<0, b_1>0, a_2 > 0, b_2>0$.} To analyze the dynamics of the trajectories, we will use Poincaré first return map. See section \ref{subsec2}.

$\bullet$ \textbf{Case 1.2.11: $a_1<0, b_1>0, a_2 = 0, b_2>0$.} To analyze the dynamics of the trajectories, we will use Poincaré first return map. See section \ref{subsec2}.

$\bullet$ \textbf{Case 1.2.12: $a_1<0, b_1>0, a_2 < 0, b_2>0$.} To analyze the dynamics of the trajectories, we will use Poincaré first return map. See section \ref{subsec2}.

$\bullet$ \textbf{Case 1.2.13: $a_1<0, b_1>0, a_2<0, b_2=0$.} In this case, taking an arbitrary initial condition $(x_0,y_0)$, the trajectory has $\omega$-limit set $\{(-\infty,-\mu)\}$ when $y_0 > \mu$ and has $\omega$-limit set $\{(-\infty,y_0)\}$ when $y_0 \leq \mu$.

$\bullet$ \textbf{Case 1.2.14: $a_1<0, b_1>0, a_2<0, b_2<0$.} In this case, taking an arbitrary initial condition $(x_0,y_0)$, the trajectory has $\omega$-limit set $\{(-\infty,-\infty)\}$.

$\bullet$ \textbf{Case 1.2.15: $a_1<0, b_1>0, a_2=0, b_2<0$.} In this case, taking an arbitrary initial condition $(x_0,y_0)$, the trajectory has $\omega$-limit set 
$$
\left\{ \left(\sqrt{x_2+\dfrac{2a_1(\mu-y_0)}{b1}},-\infty\right)\right\}
$$
when $y_0 > \mu$ and has $\omega$-limit set $\{(x_0,-\infty)\}$ when $y_0 \leq \mu$.

$\bullet$ \textbf{Case 1.2.16: $a_1<0, b_1>0, a_2>0, b_2<0$.} In this case, taking an arbitrary initial condition $(x_0,y_0)$, the trajectory has $\omega$-limit set $\{(+\infty,-\infty)\}$.

\begin{figure}[ht]
	\begin{center}
		\begin{overpic}[width=7cm]{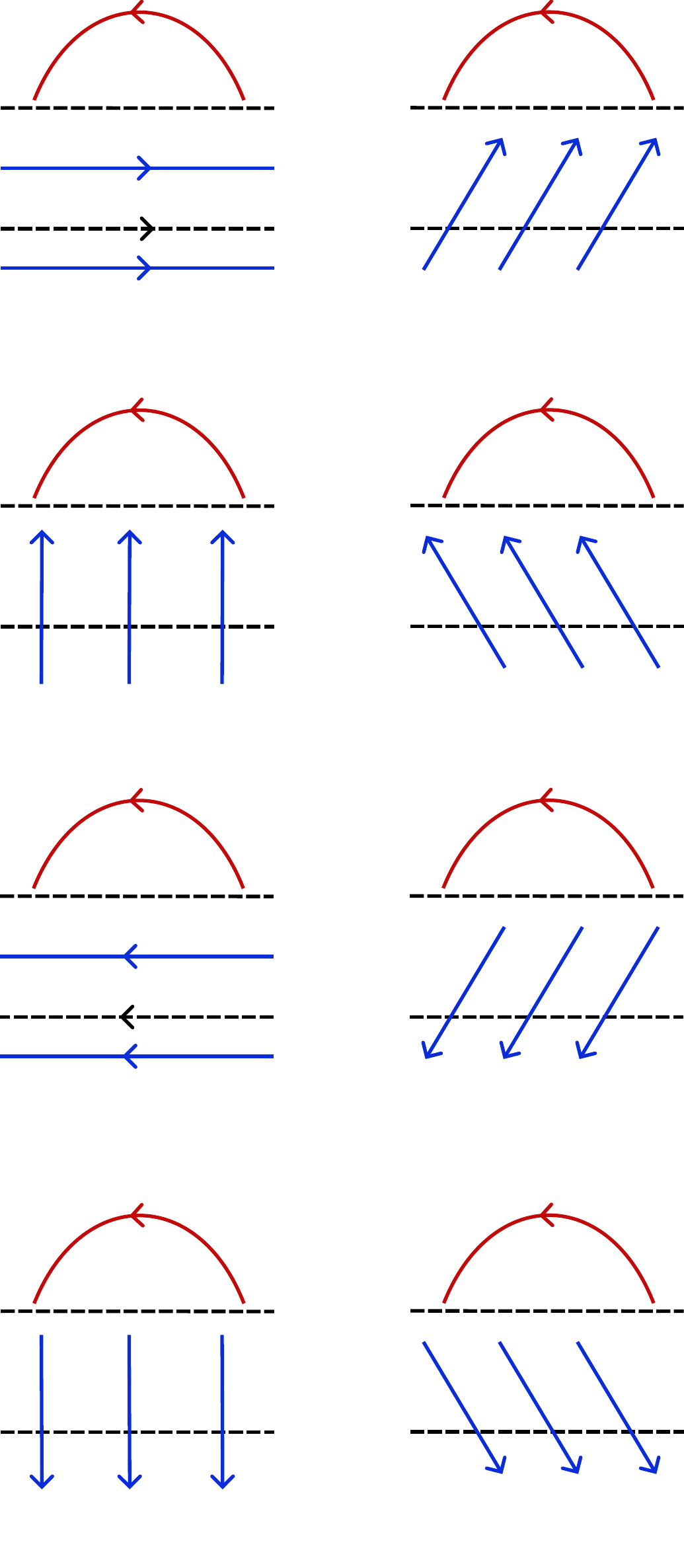}
			\put(4,78){Case $1.2.9$}
            \put(29,78){Case $1.2.10$}
            \put(4,53){Case $1.2.11$}
            \put(29,53){Case $1.2.12$}
            \put(4,28){Case $1.2.13$}
            \put(29,28){Case $1.2.14$}
            \put(4,1){Case $1.2.15$}
            \put(29,1){Case $1.2.16$}
		\end{overpic}
	\end{center}
	\caption{Dynamics for condition $a_1<0$ and $b_1>0$.}\label{figura_8.1}
\end{figure}

\begin{figure}[ht]
	\begin{center}
		\begin{overpic}[width=9cm]{figura_2.1.pdf}
			\put(97,30){$a_2$}
            \put(48,70){$b_2$}
			\put(75,45){Case $1.2.9$}
            \put(55,55){Case $1.2.10$}
            \put(22,55){Case $1.2.11$}
            \put(5,45){Case $1.2.12$}
            \put(5,20){Case $1.2.13$}
            \put(22,10){Case $1.2.14$}
            \put(55,10){Case $1.2.15$}
            \put(75,20){Case $1.2.16$}
		\end{overpic}
	\end{center}
	\caption{Bifurcation diagram in the variables $a_2$ and $b_2$, for  $a_1<0$, $b_1>0$.}\label{figura_3.2}
\end{figure}

\subsubsection{\textbf{Case 1.3:}} We will analyze the dynamics for all cases where $a_1>0$ and $b_1>0$. See Figure \ref{figura_10} for the phase portraits corresponding to these cases and Figure \ref{figura_3.3} for the bifurcation diagram in the variables $a_2$ and $b_2$. Define
$$
p_1^-=\left(-\dfrac{2a_1\sqrt{\mu}}{\sqrt{a_1b_1}},\mu\right) \quad \textrm{and} \quad p_1^+=\left(\dfrac{2a_1\sqrt{\mu}}{\sqrt{a_1b_1}},\mu\right).
$$

The orbit of \(X^+\) through $p_0=(0,-\mu)$ intersects the section
$L^+$ at two points, namely \(p_1^-\) and \(p_1^+\).

We denote by \(R_1^-\) (respectively \(R_1^+\))
the connected component of the orbit of \(X^+\) joining
$p_0$ to \(p_1^-\) (respectively \(p_1^+\)). See Figure \ref{figura_11}.

\begin{figure}[ht]
	\begin{center}
		\begin{overpic}[width=8cm]{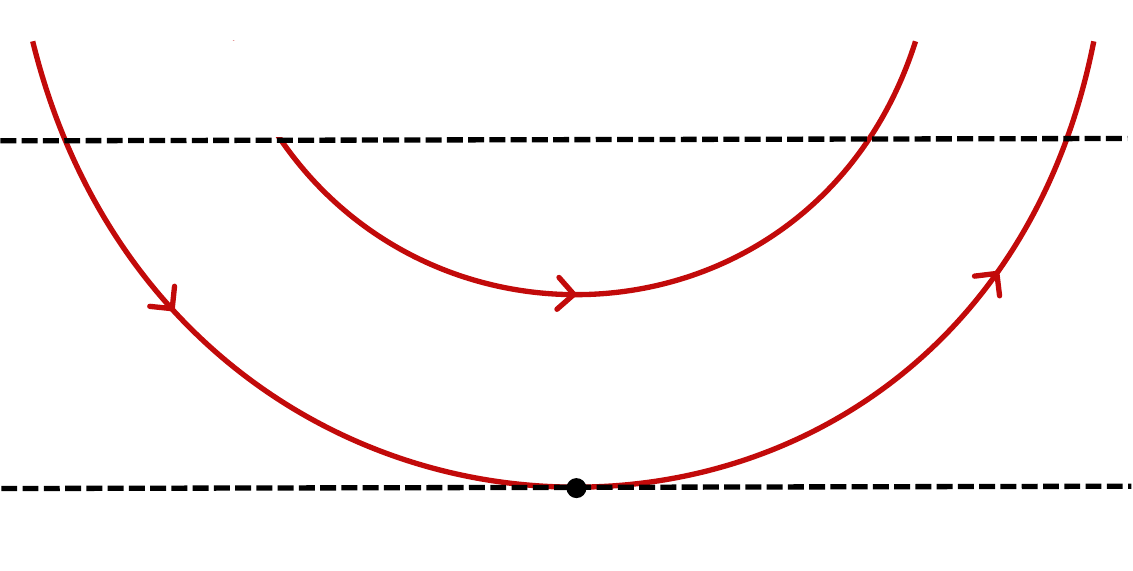}
            \put(5,45){$R_1^-$}
            \put(88,45){$R_1^+$}
            \put(45,3){$p_0$}
            \put(0,32){$p_1^-$}
            \put(95,32){$p_1^+$}
		   \end{overpic}
	\end{center}
\caption{Dynamics between the points $p_1^-$ and $p_1^+$.}\label{figura_11}
\end{figure}

The curve \(R_1^-\) separates \(\Sigma^+\) into two connected components. By the \emph{left-hand side of \(R_1^-\)} we mean the left connected component together with the curve \(R_1^-\) itself. Analogously, the \emph{right-hand side of \(R_1^-\)} denotes the right connected component together with the curve \(R_1^-\).

\vspace{0.5cm}

$\bullet$ \textbf{Case 1.3.1: $a_1>0, b_1>0, a_2 > 0, b_2 = 0$.} In this case, taking an initial condition $(x_0,y_0)$ with $y_0>\mu$ on the left-hand side of \(R_1^-\), the trajectory has $\omega$-limit set $\{(+\infty,-\mu)\}$. Taking an initial condition $(x_0,y_0)$ with $y_0>\mu$ on the right-hand side of \(R_1^-\), the trajectory has $\omega$-limit set $\{(+\infty,+\infty)\}$. Finally, taking an initial condition $(x_0,y_0)$ with $y_0\leq\mu$, the trajectory has $\omega$-limit set $\{(+\infty,y_0)\}$.

$\bullet$ \textbf{Case 1.3.2: $a_1>0, b_1>0, a_2 > 0, b_2>0$.} To analyze the dynamics of the trajectories, we will use the Poincaré first return map. See section \ref{subsec2}.

$\bullet$ \textbf{Case 1.3.3: $a_1>0, b_1>0, a_2 = 0, b_2>0$.} To analyze the dynamics of the trajectories, we will use the Poincaré first return map. See section \ref{subsec2}.

$\bullet$ \textbf{Case 1.3.4: $a_1>0, b_1>0, a_2 < 0, b_2>0$.} To analyze the dynamics of the trajectories, we will use the Poincaré first return map. See section \ref{subsec2}.

$\bullet$ \textbf{Case 1.3.5: $a_1>0, b_1>0, a_2<0, b_2=0$.} In this case, taking an initial condition $(x_0,y_0)$ with $y_0>\mu$ on the left-hand side of \(R_1^-\), the trajectory has $\omega$-limit set $\{(-\infty,-\mu)\}$. Taking an initial condition $(x_0,y_0)$ with $y_0>\mu$ on the right-hand side of \(R_1^-\), the trajectory has $\omega$-limit set $\{(+\infty,+\infty)\}$. Finally, taking an initial condition $(x_0,y_0)$ with $y_0\leq\mu$, the trajectory has $\omega$-limit set $\{(-\infty,y_0)\}$.

$\bullet$ \textbf{Case 1.3.6: $a_1>0, b_1>0, a_2<0, b_2<0$.} In this case, taking an initial condition $(x_0,y_0)$ with $y_0>\mu$ on the left-hand side of \(R_1^-\), the trajectory has $\omega$-limit set $\{(-\infty,-\infty)\}$. Taking an initial condition $(x_0,y_0)$ with $y_0>\mu$ on the right-hand side of \(R_1^-\), the trajectory has $\omega$-limit set $\{(+\infty,+\infty)\}$. Finally, taking an initial condition $(x_0,y_0)$ with $y_0\leq\mu$, the trajectory has $\omega$-limit set $\{(-\infty,-\infty)\}$.

$\bullet$ \textbf{Case 1.3.7: $a_1>0, b_1>0, a_2=0, b_2<0$.} In this case, taking an initial condition $(x_0,y_0)$ with $y_0>\mu$ on the left-hand side of \(R_1^-\), the trajectory has $\omega$-limit set
\[
\left\{ \left(\sqrt{x_2+\dfrac{2a_1(\mu-y_0)}{b1}},-\infty\right) \right\}.
\]
Taking an initial condition $(x_0,y_0)$ with $y_0>\mu$ on the right-hand side of \(R_1^-\), the trajectory has $\omega$-limit set $\{(+\infty,+\infty)\}$. Finally, taking an initial condition $(x_0,y_0)$, with $y_0\leq\mu$, the trajectory has $\omega$-limit set $\{(x_0,-\infty)\}$.

$\bullet$ \textbf{Case 1.3.8: $a_1>0, b_1>0, a_2>0, b_2<0$.} In this case, taking an initial condition $(x_0,y_0)$ with $y_0>\mu$ on the left-hand side of \(R_1^-\), the trajectory has $\omega$-limit set $\{(+\infty,-\infty)\}$. Taking an initial condition $(x_0,y_0)$ with $y_0>\mu$ on the right-hand side of \(R_1^-\), the trajectory has $\omega$-limit set $\{(+\infty,+\infty)\}$. Finally, taking an initial condition $(x_0,y_0)$ with $y_0\leq\mu$, the trajectory has $\omega$-limit set $\{(+\infty,-\infty)\}$.

\begin{figure}[ht]
	\begin{center}
		\begin{overpic}[width=7cm]{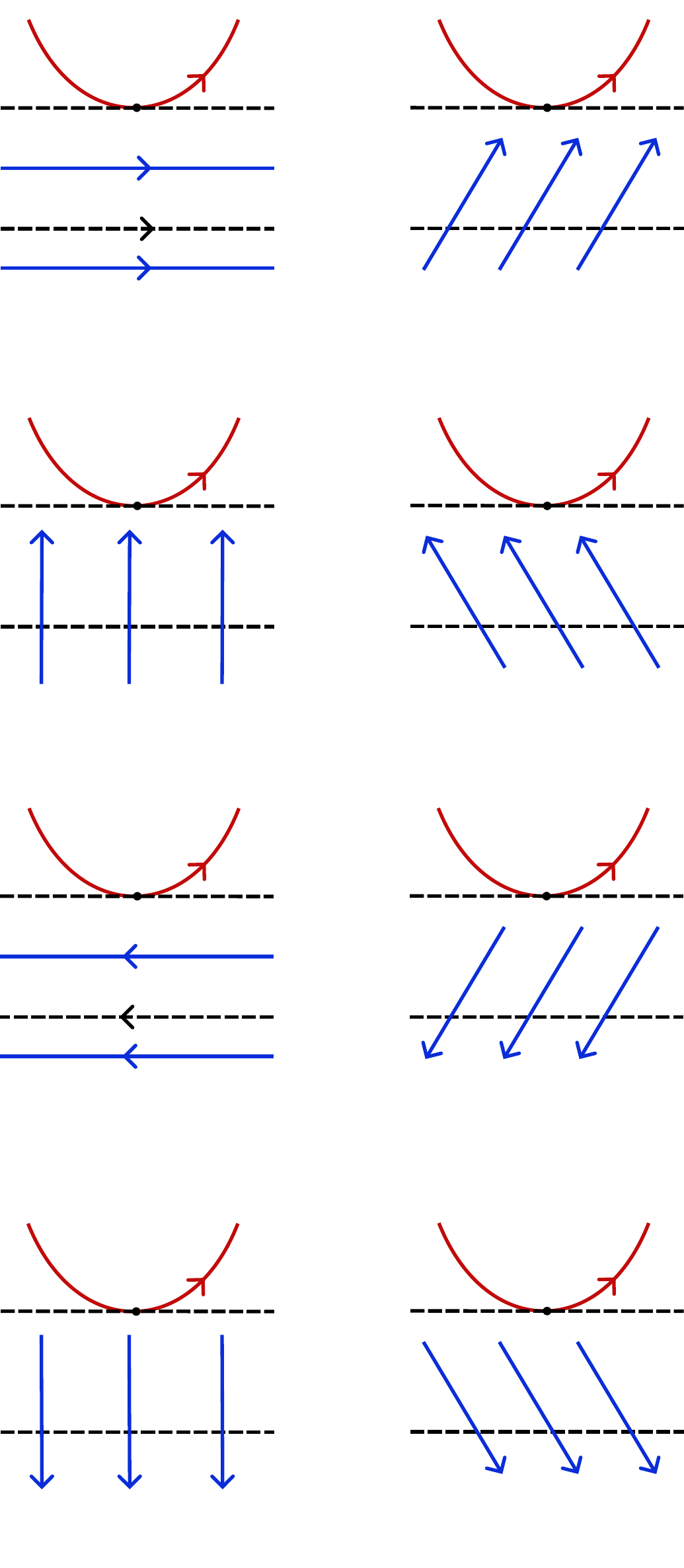}
			\put(4,78){Case $1.3.1$}
            \put(29,78){Case $1.3.2$}
            \put(4,53){Case $1.3.3$}
            \put(29,53){Case $1.3.4$}
            \put(4,28){Case $1.3.5$}
            \put(29,28){Case $1.3.6$}
            \put(4,1){Case $1.3.7$}
            \put(29,1){Case $1.3.8$}
		\end{overpic}
	\end{center}
	\caption{Dynamics for condition $a_1>0$ and $b_1>0$.}\label{figura_10}
\end{figure}

\begin{figure}[ht]
	\begin{center}
		\begin{overpic}[width=9cm]{figura_2.1.pdf}
            \put(97,30){$a_2$}
            \put(48,70){$b_2$}
			\put(75,45){Case $1.3.1$}
            \put(55,55){Case $1.3.2$}
            \put(25,55){Case $1.3.3$}
            \put(5,45){Case $1.3.4$}
            \put(5,20){Case $1.3.5$}
            \put(25,10){Case $1.3.6$}
            \put(55,10){Case $1.3.7$}
            \put(75,20){Case $1.3.8$}
		\end{overpic}
	\end{center}
	\caption{Bifurcation diagram in the variables $a_2$ and $b_2$, for  $a_1>0$, $b_1>0$.}\label{figura_3.3}
\end{figure}

\vspace{0.5cm}

Now, we will analyze the dynamics for all cases where $a_1<0$ and $b_1<0$. See Figure \ref{figura_10.1} for the phase portraits corresponding to these cases and Figure \ref{figura_3.4} for the bifurcation diagram in the variables $a_2$ and $b_2$. Define
$$
p_2^-=\left(\dfrac{2a_1\sqrt{\mu}}{\sqrt{a_1b_1}},\mu\right) \quad \textrm{and} \quad p_2^+=\left(-\dfrac{2a_1\sqrt{\mu}}{\sqrt{a_1b_1}},\mu\right).
$$

The orbit of \(X^+\) through $p_0=(0,-\mu)$ intersects the section
$L^+$ at two points, namely \(p_2^-\) and \(p_2^+\).

We denote by \(R_2^-\) (respectively \(R_2^+\))
the connected component of the orbit of \(X^+\) joining
$p_0$ to \(p_2^-\) (respectively \(p_2^+\)). See Figure \ref{figura_11.1}.

\begin{figure}[ht]
	\begin{center}
		\begin{overpic}[width=8cm]{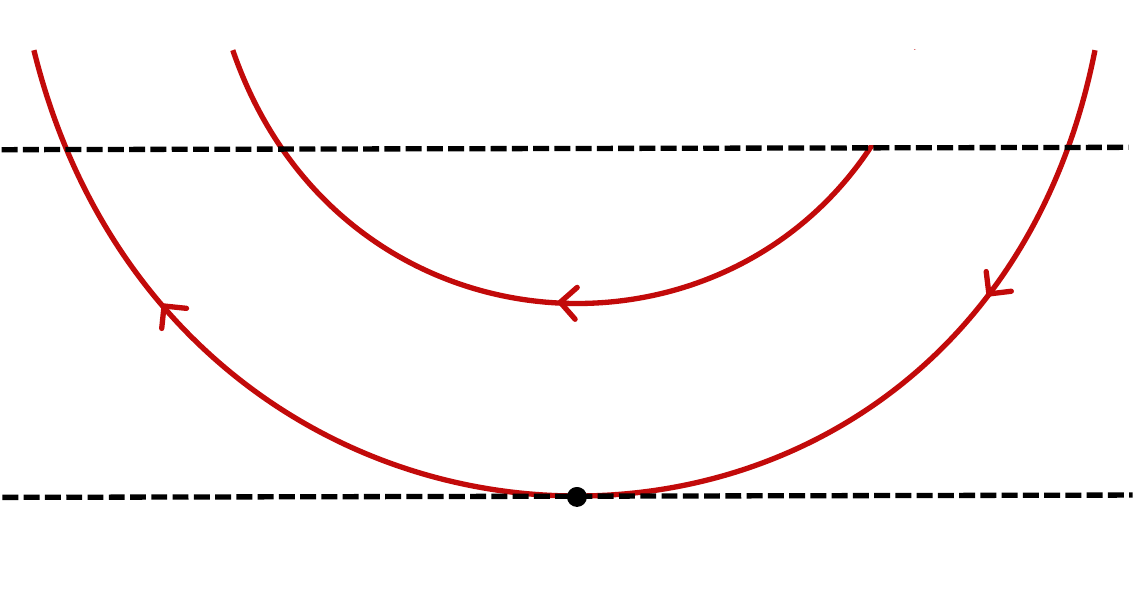}
            \put(5,45){$R_2^-$}
            \put(88,45){$R_2^+$}
            \put(46,5){$p_0$}
            \put(0,34){$p_2^-$}
            \put(95,34){$p_2^+$}
		   \end{overpic}
	\end{center}
\caption{Dynamics between points $p_2^-$ and $p_2^+$.}\label{figura_11.1}
\end{figure}

The curve \(R_2^+\) separates \(\Sigma^+\) into two connected components. By the \emph{right-hand side of \(R_2^+\)} we mean the right connected component together with the curve \(R_2^+\) itself. Analogously, the \emph{left-hand side of \(R_2^+\)} denotes the left connected component together with the curve \(R_2^+\).

\vspace{0.5cm}

$\bullet$ \textbf{Case 1.3.9: $a_1<0, b_1<0, a_2 > 0, b_2 = 0$.} In this case, taking an initial condition $(x_0,y_0)$ with $y_0>\mu$ on the right-hand side of \(R_2^+\), the trajectory has $\omega$-limit set $\{(+\infty,-\mu)\}$. Taking an initial condition $(x_0,y_0)$ with $y_0>\mu$ on the left-hand side of \(R_2^+\), the trajectory has $\omega$-limit set $\{(-\infty,+\infty)\}$. Finally, taking an initial condition $(x_0,y_0)$ with $y_0\leq\mu$, the trajectory has $\omega$-limit set $\{(+\infty,y_0)\}$.

$\bullet$ \textbf{Case 1.3.10: $a_1<0, b_1<0, a_2 > 0, b_2>0$.} To analyze the dynamics of the trajectories, we will use Poincaré first return map. See section \ref{subsec2}.

$\bullet$ \textbf{Case 1.3.11: $a_1<0, b_1<0, a_2 = 0, b_2>0$.} To analyze the dynamics of the trajectories, we will use Poincaré first return map. See section \ref{subsec2}.

$\bullet$ \textbf{Case 1.3.12: $a_1<0, b_1<0, a_2 < 0, b_2>0$.} To analyze the dynamics of the trajectories, we will use Poincaré first return map. See section \ref{subsec2}.

$\bullet$ \textbf{Case 1.3.13: $a_1<0, b_1<0, a_2<0, b_2=0$.} In this case, taking an initial condition $(x_0,y_0)$ with $y_0>\mu$ on the right-hand side of \(R_2^+\), the trajectory has $\omega$-limit set $\{(-\infty,-\mu)\}$. Taking an initial condition $(x_0,y_0)$ with $y_0>\mu$ on the left-hand side of \(R_2^+\), the trajectory has $\omega$-limit set $\{(-\infty,+\infty)\}$. Finally, taking an initial condition $(x_0,y_0)$ with $y_0\leq\mu$, the trajectory has $\omega$-limit set $\{(-\infty,y_0)\}$.

$\bullet$ \textbf{Case 1.3.14: $a_1<0, b_1<0, a_2<0, b_2<0$.} In this case, taking an initial condition $(x_0,y_0)$ with $y_0>\mu$ on the right-hand side of \(R_2^+\), the trajectory has $\omega$-limit set $\{(-\infty,-\infty)\}$. Taking an initial condition $(x_0,y_0)$ with $y_0>\mu$ on the left-hand side of \(R_2^+\), the trajectory has $\omega$-limit set $\{(-\infty,+\infty)\}$. Finally, taking an initial condition $(x_0,y_0)$ with $y_0\leq\mu$, the trajectory has $\omega$-limit set $\{(-\infty,-\infty)\}$.

$\bullet$ \textbf{Case 1.3.15: $a_1<0, b_1<0, a_2=0, b_2<0$.} In this case, taking an initial condition $(x_0,y_0)$ with $y_0>\mu$ on the right-hand side of \(R_2^+\), the trajectory has $\omega$-limit set
\[
\left\{ \left(\sqrt{x_2+\dfrac{2a_1(\mu-y_0)}{b_1}},-\infty\right) \right\} .
\]
Taking an initial condition $(x_0,y_0)$ with $y_0>\mu$ on the left-hand side of \(R_2^+\), the trajectory has $\omega$-limit set $\{(-\infty,+\infty)\}$. Finally, taking an initial condition $(x_0,y_0)$ with $y_0\leq\mu$, the trajectory has $\omega$-limit set $\{(x_0,-\infty)\}$.

$\bullet$ \textbf{Case 1.3.16: $a_1<0, b_1<0, a_2>0, b_2<0$.} In this case, taking an initial condition $(x_0,y_0)$ with $y_0>\mu$ on the right-hand side of \(R_2^+\), the trajectory has $\omega$-limit set $\{(+\infty,-\infty)\}$. Taking an initial condition $(x_0,y_0)$ with $y_0>\mu$ on the left-hand side of \(R_2^+\), the trajectory has $\omega$-limit set $\{(-\infty,+\infty)\}$. Finally, taking an initial condition $(x_0,y_0)$ with $y_0\leq\mu$, the trajectory has $\omega$-limit set $\{(+\infty,-\infty)\}$.

\begin{figure}[ht]
	\begin{center}
		\begin{overpic}[width=7cm]{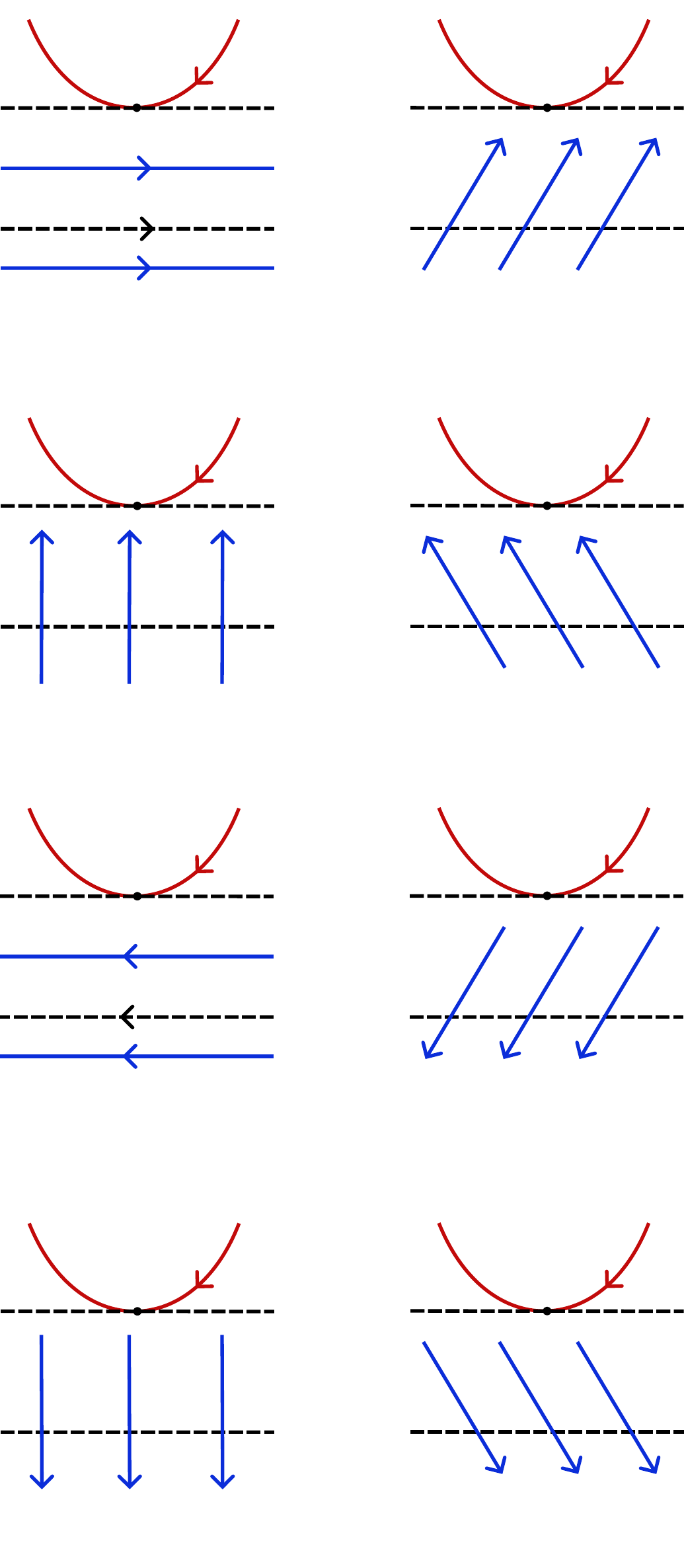}
			\put(4,78){Case $1.3.9$}
            \put(29,78){Case $1.3.10$}
            \put(4,53){Case $1.3.11$}
            \put(29,53){Case $1.3.12$}
            \put(4,28){Case $1.3.13$}
            \put(29,28){Case $1.3.14$}
            \put(4,1){Case $1.3.15$}
            \put(29,1){Case $1.3.16$}
		\end{overpic}
	\end{center}
	\caption{Dynamics for condition $a_1<0$ and $b_1<0$.}\label{figura_10.1}
\end{figure}

\begin{figure}[ht]
	\begin{center}
		\begin{overpic}[width=9cm]{figura_2.1.pdf}
            \put(97,30){$a_2$}
            \put(48,70){$b_2$}
			\put(75,45){Case $1.3.9$}
            \put(55,55){Case $1.3.10$}
            \put(23,55){Case $1.3.11$}
            \put(5,45){Case $1.3.12$}
            \put(5,20){Case $1.3.13$}
            \put(23,10){Case $1.3.14$}
            \put(55,10){Case $1.3.15$}
            \put(75,20){Case $1.3.16$}
		\end{overpic}
	\end{center}
	\caption{Bifurcation diagram in the variables $a_2$ and $b_2$, for  $a_1<0$, $b_1<0$.}\label{figura_3.4}
\end{figure}

\subsubsection{\textbf{Poincaré first return map}}\label{subsec2}

In this section, via the Poincaré first return map, let us describe the dynamics of the trajectories in Cases 1.2.2, 1.2.3, 1.2.4, 1.2.10, 1.2.11, 1.2.12, 1.3.2, 1.3.3, 1.3.4, 1.3.10, 1.3.11 and 1.3.12. Note that for all these cases $b_2\neq0$.

Assume that $a_2\neq0$ and consider $L^+$ given in \eqref{eqL}.

The Poincaré first return map is
\[
P:D(P)\subset L^+\longrightarrow L^+,
\]
where
\[
D(P)=\{p\in L^+:\text{the positive orbit of }p
\text{ returns to }L^+\}.
\]
Suppose an initial condition $p_1=\left(x_1, \mu\right)\in D(P)$ under the action of the vector field $X^+$. The solutions for this vector field are given by
$$
x^+\left(t\right)=a_1t+x_1 \quad \textrm{and} \quad y^+\left(t\right)=\dfrac{1}{2}(a_1b_1t^2+2b_1tx_1+2\mu). 
$$
The time which the flow takes to reach $L^-$ is 
\begin{equation}\label{eq2}
	t=-\dfrac{b_1x_1+\sqrt{b_1(b_1x_1^2-4a_1\mu)}}{a_1b_1}.
\end{equation}
Substituting \eqref{eq2} in the component solution $x^+$,
\[
x^+\left(t\right):=x_2=-\dfrac{\sqrt{b_1(b_1x_1^2-4a_1\mu)}}{b_1}.
\]
From this moment, the flow is governed by the vector field $X^-$ located under the hysteresis region and the solution for this vector field from the initial position $p_2=(x_2,-\mu)$ is given by 
\[
	x^-(t)=a_2t-\dfrac{\sqrt{b_1(b_1x_1^2-4a_1\mu)}}{b_1} \quad \textrm{and} \quad y^-\left(t\right)=b_2t-\mu.
\]
The time that this flow takes to achieve $L^+$ is $t=2\mu/b_2$ and substituting this time $t$ in $x^-$ gives
\[
x^-\left(\dfrac{2\mu}{b_2}\right)=\dfrac{2a_2\mu}{b_2}-\dfrac{\sqrt{b_1(b_1x_1^2-4a_1\mu)}}{b_1}.
\]
Hence, the Poincaré first return map is 
\begin{equation}\label{eqP}
    P\left(x\right)=\dfrac{2a_2\mu}{b_2}-\dfrac{\sqrt{b_1(b_1x^2-4a_1\mu)}}{b_1}
\end{equation}
and the derivative of $P$ is	
\begin{equation}\label{eqP'}
    P'\left(x\right)=-\dfrac{b_1x}{\sqrt{b_1(b_1x^2-4a_1\mu)}}.
\end{equation}

To find the fixed points of the Poincaré first return map $P$ and their stability, consider the following lemmas.

\begin{lemma}\label{lema1}
Let
\[
P(x)=A-B\sqrt{R(x)},
\]
where
\begin{equation}\label{eqABR}
    A=\frac{2a_2\mu}{b_2},
\qquad
B=\frac{1}{b_1},
\qquad
R(x)=b_1\left(b_1x^2-4a_1\mu\right).
\end{equation}

A point \(x^*\in\mathbb{R}\) is a fixed point of $P$, that is,
\[
P(x^*)=x^*,
\]
if and only if the following conditions hold:
\begin{equation}\label{eq1lema}
    (A-x^*)^2=B^2R(x^*),
\end{equation}
\begin{equation}\label{eq2lema}
    R(x^*)\geq 0,
\end{equation}
and
\begin{equation}\label{eq3lema}
    B(A-x^*)\ge 0.
\end{equation}
Futhermore, 
\[
    x^*=\dfrac{a_1 b_2}{a_2 b_1} + \dfrac{a_2 \mu}{b_2}.
\]

\end{lemma}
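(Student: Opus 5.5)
The plan is to treat the statement as an elementary equivalence about the equation $A-B\sqrt{R(x)}=x$, using only that $B=1/b_1\neq 0$ (we are in the case $b_1\neq0$) and that $A=2a_2\mu/b_2\neq 0$, which holds because the Poincaré map was constructed under the standing assumptions $a_2\neq 0$ and $b_2\neq0$. First I would note that $P(x)$ is a real number precisely when $R(x)\ge 0$, so any fixed point automatically satisfies \eqref{eq2lema}.

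For the direct implication, suppose $P(x^*)=x^*$. Then $B\sqrt{R(x^*)}=A-x^*$. Multiplying by $B$ gives $B(A-x^*)=B^2\sqrt{R(x^*)}\ge 0$, which is \eqref{eq3lema}. Squaring gives \eqref{eq1lema}. For the converse, assume \eqref{eq1lema}, \eqref{eq2lema} and \eqref{eq3lema}. Then $\sqrt{R(x^*)}$ is defined, and by \eqref{eq1lema} we have $|B|\sqrt{R(x^*)}=|A-x^*|$. Condition \eqref{eq3lema} says that $B$ and $A-x^*$ have the same sign whenever $A-x^*\neq0$; the case $A-x^*=0$ is trivial. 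Hence $B\sqrt{R(x^*)}=A-x^*$, that is, $P(x^*)=x^*$. The role of \eqref{eq3lema} is exactly to discard the spurious root introduced by squaring, namely the root of $A+B\sqrt{R}=x$.

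For the explicit value, I would expand \eqref{eq1lema} using $B^2R(x)=\frac{1}{b_1^2}\,b_1(b_1x^2-4a_1\mu)=x^2-\frac{4a_1\mu}{b_1}$. The equation becomes
\[
A^2-2Ax^*+(x^*)^2=(x^*)^2-\frac{4a_1\mu}{b_1}.
\]
The quadratic terms cancel, which I expect to be the key observation: the equation is linear, so there is at most one candidate. Since $A\neq0$, we get
\[
x^*=\frac{A}{2}+\frac{2a_1\mu}{b_1A}.
\]
Substituting $A=2a_2\mu/b_2$ gives $x^*=\dfrac{a_2\mu}{b_2}+\dfrac{a_1b_2}{a_2b_1}$, as claimed.

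I would close by stating the precise meaning of the ``furthermore'' part. Any fixed point must equal this $x^*$. Conversely, $x^*$ is an actual fixed point if and only if \eqref{eq2lema} and \eqref{eq3lema} hold at $x^*$; these are sign conditions on the parameters that must be checked case by case in the subsequent analysis. The main subtlety is therefore not the computation but keeping track of the implicit hypothesis $a_2\neq0$. When $a_2=0$, we have $A=0$, the linear equation degenerates to $0=-4a_1\mu/b_1$, and no isolated fixed point exists; this case must be treated separately, consistent with the continuum of periodic orbits found in Cases 1.1.3 and 1.1.11.
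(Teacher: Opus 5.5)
Your proposal is correct and follows the paper's proof of the equivalence essentially step for step. You isolate $A-x^*=B\sqrt{R(x^*)}$, read off $R(x^*)\ge 0$ and $B(A-x^*)\ge 0$, and square; for the converse you take square roots and use the sign condition to recover the correct sign.

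The one difference is that the paper only asserts the explicit value of $x^*$, while you derive it. You note that $B^2R(x)=x^2-4a_1\mu/b_1$, so the quadratic terms in $(A-x)^2=B^2R(x)$ cancel. Since $A\neq 0$ when $a_2\neq 0$, this gives the unique candidate $x^*=\frac{A}{2}+\frac{2a_1\mu}{b_1A}$. This also makes explicit that the formula gives the only possible fixed point, not automatically an actual one.
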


\begin{proof}
The fixed point equation
\[
x^*=A-B\sqrt{R(x^*)}
\]
is equivalent to
\begin{equation}\label{eq4lema}
    A-x^*=B\sqrt{R(x^*)}.
\end{equation}

Since \(\sqrt{R(x^*)}\) is defined only when \(R(x^*)\ge0\), every fixed point must satisfy
\[
R(x^*)\ge0.
\]
Moreover, because \(\sqrt{R(x^*)}\ge0\), the right-hand side of equation \eqref{eq4lema} has the same sign as \(B\), and therefore
\[
B(A-x^*)\ge0.
\]
Finally, squaring both sides of equation \eqref{eq4lema}, yields
\[
(A-x^*)^2=B^2R(x^*).
\]

Conversely, suppose that
\[
(A-x)^2=B^2R(x),
\qquad
R(x)\ge0,
\qquad
B(A-x)\ge0.
\]
Since \(R(x)\ge0\), taking square roots gives
\[
|A-x|=|B|\sqrt{R(x)}.
\]
The sign condition \(B(A-x)\ge0\) implies that \(A-x\) and \(B\) have the same sign, hence
\[
A-x=B\sqrt{R(x)}.
\]
Therefore,
\[
x=A-B\sqrt{R(x)}=P(x),
\]
showing that \(x\) is a fixed point of $P$.

Futhermore, 
\[
    x^*=A-B\sqrt{R(x^*)}=\dfrac{a_1 b_2}{a_2 b_1} + \dfrac{a_2 \mu}{b_2}.
\]
\end{proof}

Throughout this section we restrict our attention to the generic case in
which the fixed point belongs to the interior of the domain of the first
return map. More precisely, we assume that
\[
R(x^*)>0\Leftrightarrow a_2^2b_1\mu\neq a_1b_2^2,
\]
or equivalently,
\[
x^*\notin\partial D(P),
\]
where
\[
\partial D(P)
=
\{x\in D(P):R(x)=0\}.
\]
This assumption guarantees that the first return map is continuously
differentiable in a neighborhood of the fixed point. Indeed, by \eqref{eqP'} and \eqref{eqABR},

\[
P'(x)
=
-
\frac{b_1x}
{\sqrt{R(x)}},
\]
so the derivative is not defined whenever \(R(x)=0\). Consequently, the
usual stability criterion based on the derivative of $P$ at the fixed point can not be applied when the fixed point lies on the boundary of the domain.

\begin{remark}
The excluded case
\[
R(x^*)=0
\]
corresponds to a degenerate configuration in which the periodic orbit is
created or destroyed exactly at the boundary of the domain of the first
return map. Since the map fails to be differentiable at this point, its
stability requires a separate analysis, which is beyond the scope of the
present work.
\end{remark}

\begin{lemma}\label{lema2}
Let $P$ be the Poincaré first return map given by \eqref{eqP}. Then

\[
a_1>0,\ a_2>0
\quad\Longrightarrow\quad
P(x)>x,
\]
for every $x\in D(P)$, while

\[
a_1<0,\ a_2<0
\quad\Longrightarrow\quad
P(x)<x,
\]
for every $x\in D(P)$.

Consequently, whenever $a_1$ and $a_2$ have the same sign, the map $P$ admits no fixed points.
\end{lemma}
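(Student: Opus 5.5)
The plan is to avoid manipulating the closed form \eqref{eqP} directly. The sign of $-\sqrt{R(x)}/b_1$ depends on $b_1$, so estimating it would force a split into Cases 1.2 and 1.3. Instead I will go back to the derivation of $P$ as a composition of two flight segments and write the displacement $P(x)-x$ as a sum of two horizontal increments, one for each vector field. Throughout I use that $b_2>0$ in every case treated in this subsection (Cases 1.2.2--1.2.4, 1.2.10--1.2.12, 1.3.2--1.3.4, 1.3.10--1.3.12), and that $a_2\neq0$ by hypothesis.

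First step: fix $x\in D(P)$ and follow the orbit from $p_1=(x,\mu)$. By the switching convention it is governed by $X^+$ until it first reaches $L^-$, at a time $t_1=t_1(x)$ given by \eqref{eq2}. Since $y$ must decrease from $\mu$ to $-\mu$ along a continuous trajectory, $t_1>0$ strictly. This is exactly what membership in $D(P)$ encodes: the root in \eqref{eq2} is the first positive crossing time. From $x^+(t)=a_1t+x$ we get $x_2-x=a_1t_1$.

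Second step: the orbit then follows $X^-$ from $p_2=(x_2,-\mu)$ and returns to $L^+$ after time $2\mu/b_2>0$, with horizontal increment $2a_2\mu/b_2$. Hence
\[
P(x)-x=a_1t_1(x)+\frac{2a_2\mu}{b_2}.
\]
If $a_1,a_2>0$, both terms are positive, since $t_1>0$, $\mu>0$ and $b_2>0$, so $P(x)>x$. If $a_1,a_2<0$, both terms are negative, so $P(x)<x$. In either case $P(x)\neq x$ on $D(P)$, which gives the last assertion of the lemma; it is also consistent with Lemma \ref{lema1}, since any candidate $x^*$ must then violate one of \eqref{eq2lema}--\eqref{eq3lema}.

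The main obstacle is justifying that the expression $x_2=-\sqrt{R(x)}/b_1$ used in \eqref{eqP} really equals $x+a_1t_1$ with $t_1$ the \emph{positive, first} hitting time, i.e.\ that the sign choice in \eqref{eq2} is the correct root for every $x\in D(P)$ and every sign pattern of $(a_1,b_1)$. To settle this I would solve $y^+(t)=-\mu$ as a quadratic in $t$, note that its roots $t_\pm$ satisfy $x+a_1t_\pm=\pm\sqrt{R(x)}/b_1\cdot(\text{sign})$, and check case by case (invisible fold, visible fold, $b_1\gtrless0$) that the smallest positive root is the one selected in \eqref{eq2}. Any $x$ for which no positive root exists is, by definition, excluded from $D(P)$. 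Once this identification holds, the sign argument above is immediate.
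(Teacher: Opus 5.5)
Your proof is correct and is essentially the paper's argument. The paper simply notes that $\dot x=a_1$ on the $X^+$ leg and $\dot x=a_2$ on the $X^-$ leg have the same sign, so the abscissa moves monotonically from $(x,\mu)$ to the return point; your identity $P(x)-x=a_1t_1+2a_2\mu/b_2$ with positive flight times is the integrated form of the same observation. The root-selection check you flag as the main obstacle is not needed there, because $P$ and $D(P)$ are taken dynamically (and the check does hold for $x\in D(P)$). It is still a worthwhile consistency check, since the closed form \eqref{eqP} is defined on a strictly larger set than $D(P)$.
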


\begin{proof}
Consider the initial condition $(x,\mu) \in L^+$.

Suppose first that $a_1>0$ and $a_2>0$. The trajectory evolves under the vector field $X^+$ and
\[
\dfrac{dx^+}{dt}=a_1>0,
\]
i.e., the $x$-coordinate is strictly increasing until the trajectory reaches the section $L^-$. After switching, the trajectory evolves under
the vector field $X^-$. Since
\[
\dfrac{dx^-}{dt}=a_2>0,
\]
the $x$-coordinate remains strictly increasing until the trajectory returns to the section $L^+$. Consequently, the abscissa of the return point is strictly larger than the abscissa of the initial point, that is,
$
P(x)>x.
$

Now suppose that $a_1<0$ and $a_2<0$. How the flow is governed by the vector field $X^+$ and
\[
\dfrac{dx^+}{dt}=a_1<0,
\]
so the $x$-coordinate is strictly decreasing until the first switching.
During the return,
\[
\dfrac{dx^-}{dt}=a_2<0,
\]
i.e., the $x$-coordinate continues to decrease until the trajectory returns to the
section $L^+$. Hence the return point satisfies
$
P(x)<x.
$
Therefore,
\[
(P(x)-x)a_1>0,
\qquad
\forall\,x\in D(P),
\]
which implies that the equation
$
P(x)=x
$
has no solution. Hence, the Poincaré first return map admits no fixed
points.
\end{proof}

\begin{lemma}\label{lema3}
Assume that the Poincaré first return map $P$ given by \eqref{eqP} admits the fixed point $x^*$, i.e.,
\[
x^*=
\frac{a_1b_2}{a_2b_1}
+
\frac{a_2\mu}{b_2}.
\]

Then

\[
|P'(x^*)|<1
\quad\Longleftrightarrow\quad
a_1b_1<0,
\]

and

\[
|P'(x^*)|>1
\quad\Longleftrightarrow\quad
a_1b_1>0.
\]

Consequently, whenever the periodic orbit exists, it is attracting if
\[a_1b_1<0\] and repelling if \[a_1b_1>0.\]
\end{lemma}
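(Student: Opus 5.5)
The plan is a direct computation: evaluate the derivative \eqref{eqP'} at the fixed point, using the fixed-point relation to remove the square root, and then compare $|P'(x^*)|$ with $1$ algebraically. We work under the standing genericity assumption $R(x^*)>0$, so that $P$ is $C^1$ near $x^*$ and the usual derivative criterion for fixed points of one-dimensional maps applies.

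\emph{Step 1: eliminate the square root.} By Lemma \ref{lema1}, equation \eqref{eq4lema} gives $\sqrt{R(x^*)}=(A-x^*)/B=b_1(A-x^*)$. Since $R(x^*)>0$, this quantity is strictly positive, so in particular $A-x^*\neq 0$. Substituting into \eqref{eqP'} gives
\[
P'(x^*)=-\frac{b_1x^*}{b_1(A-x^*)}=-\frac{x^*}{A-x^*}.
\]

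\emph{Step 2: use the explicit fixed point.} Set
\[
u=\frac{a_2\mu}{b_2},\qquad v=\frac{a_1b_2}{a_2b_1}.
\]
Lemma \ref{lema1} gives $x^*=u+v$. Since $A=2u$, we get $A-x^*=u-v$. Hence
\[
|P'(x^*)|=\frac{|u+v|}{|u-v|}.
\]
Therefore $|P'(x^*)|<1$ iff $(u+v)^2<(u-v)^2$, that is iff $uv<0$. Likewise $|P'(x^*)|>1$ iff $uv>0$. The case $uv=0$ cannot occur: $b_2\neq0$ and $a_2\neq0$ are assumed, and $a_1b_1\neq0$ in the cases where the return map is used.

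\emph{Step 3: identify the sign.} We have
\[
uv=\frac{a_1\mu}{b_1},
\]
and since $\mu>0$ the sign of $uv$ equals the sign of $a_1b_1$. This gives both equivalences in the statement. The stability conclusion then follows from the standard fact that a hyperbolic fixed point of a $C^1$ map on an interval is attracting when $|P'(x^*)|<1$ and repelling when $|P'(x^*)|>1$. This fixed point corresponds to the periodic orbit of $Z$ crossing $L^+$ at $(x^*,\mu)$.

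The main subtlety is Step 1. One must remember that $\sqrt{R(x^*)}$ equals $b_1(A-x^*)$, not $|A-x^*|/|b_1|$, so the sign condition \eqref{eq3lema} from Lemma \ref{lema1} is exactly what is needed here. One must also confirm that $x^*$ lies in the interior of $D(P)$, which is precisely the genericity hypothesis $R(x^*)>0$ (equivalently $a_2^2b_1\mu\neq a_1b_2^2$). After that, everything reduces to the elementary identity $(u+v)^2-(u-v)^2=4uv$.
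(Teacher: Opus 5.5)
Your proof is correct and is essentially the paper's argument. Substituting $\sqrt{R(x^*)}=b_1(A-x^*)$ together with $x^*=u+v$ and $A-x^*=u-v$ yields exactly the paper's ratio $|a_1b_2^2+a_2^2b_1\mu|/|a_2^2b_1\mu-a_1b_2^2|$, and your identity $(u+v)^2-(u-v)^2=4uv$ is the paper's ``squaring and simplifying'' step. One small correction to your closing remark: the sign condition \eqref{eq3lema} is not needed for $|P'(x^*)|$, because the squared relation \eqref{eq1lema} alone gives $\sqrt{R(x^*)}=|b_1|\,|A-x^*|$ (not $|A-x^*|/|b_1|$), and this produces the same absolute value $|x^*|/|A-x^*|$.
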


\begin{proof}
From Equation \eqref{eqP'},
\[
|P'(x^*)|
=
\left|
\frac{
a_1b_2^2+a_2^2b_1\mu
}{
a_2^2b_1\mu-a_1b_2^2
}
\right|.
\]

The condition \(|P'(x^*)|<1\) is equivalent to
\[
\left|
a_1b_2^2+a_2^2b_1\mu
\right|
<
\left|
a_2^2b_1\mu-a_1b_2^2
\right|.
\]

Squaring both sides and simplifying gives
\[
(a_1b_2^2)(a_2^2b_1\mu)<0.
\]

Since
\[
a_2^2>0,
\qquad
b_2^2>0,
\qquad
\mu>0,
\]
the latter condition is equivalent to
\[
a_1b_1<0.
\]

Hence
\[
|P'(x^*)|<1
\quad\Longleftrightarrow\quad
a_1b_1<0.
\]

Similarly,
\[
|P'(x^*)|>1
\quad\Longleftrightarrow\quad
a_1b_1>0.
\]

Therefore the periodic orbit is attracting when \(a_1b_1<0\) and repelling when \(a_1b_1>0\).
\end{proof}

The previous lemma shows that the stability of the periodic orbit can be
expressed entirely in terms of the sign of the product $a_1b_1$.
Since the sign of $a_1b_1$ determines the visibility of the fold point of
$X^+$, the stability admits a simple geometric interpretation.

\begin{corollary}
Assume that the Poincaré first return map $P$ admits a fixed point. Then the corresponding periodic orbit is attracting if and only if the fold
point of $X^+$ is invisible, and repelling if and only if the fold point of $X^+$ is visible.
\end{corollary}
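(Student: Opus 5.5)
The plan is to reduce the corollary to Lemma \ref{lema3}. This requires translating the sign condition on $a_1b_1$ into the visibility dichotomy already set up in Cases 1.2 and 1.3. We assume, as in the corollary, that $P$ has a fixed point $x^*$ lying in the interior of $D(P)$, that is $R(x^*)>0$. This is the standing genericity assumption of this subsection. It makes $P$ continuously differentiable near $x^*$ by \eqref{eqP'}. By Lemma \ref{lema2}, the existence of a fixed point already forces $a_1a_2<0$, in particular $a_1\neq0$ and $a_2\neq 0$. Since $b_1\neq0$ throughout, we have $a_1b_1\neq0$. So we are in exactly one of Case 1.2 or Case 1.3, and the fold point $(0,\mu)$ of $X^+$ is a genuine fold.

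First, I will record the visibility criterion. Along $L^+$ the normal component of $X^+$ is $b_1x$, so $(0,\mu)$ is a tangency. The second Lie derivative of $h(x,y)=y-\mu$ along $X^+$ is $X^+(X^+h)=X^+(b_1x)=a_1b_1$. The fold is visible (the orbit through it lies in $\Sigma^+$, i.e.\ $y\ge\mu$ near it) iff $a_1b_1>0$, and invisible iff $a_1b_1<0$. This matches the explicit parabola $y=\mu+\tfrac{b_1}{2a_1}(x^2)$ obtained from the solutions $x^+,y^+$ written before \eqref{eq2}. It is also the convention used to split Cases 1.2 and 1.3.

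Second, I will link the derivative to orbit stability. The periodic orbit of $Z$ through $(x^*,\mu)$ corresponds to the fixed point $x^*$ of $P$. Since $P$ is $C^1$ near $x^*$, the mean value theorem gives the following. If $|P'(x^*)|<1$, then $|P(x)-x^*|\le\lambda|x-x^*|$ with $\lambda<1$ on a neighbourhood, so iterates converge and the orbit is attracting. If $|P'(x^*)|>1$, nearby points are pushed away, so the orbit is repelling. By continuity of the flows of $X^\pm$ on compact time intervals, points near the orbit reach $L^+$ near $x^*$. Hence attraction or repulsion for $P$ transfers to the orbit.

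Combining the two steps with Lemma \ref{lema3} gives: attracting $\Leftrightarrow |P'(x^*)|<1 \Leftrightarrow a_1b_1<0 \Leftrightarrow$ invisible fold. The repelling/visible statement follows in the same way. The biconditionals are legitimate because the alternatives are exhaustive. The case $|P'(x^*)|=1$ is excluded: from the formula in Lemma \ref{lema3} it would require $a_1b_2^2\,a_2^2b_1\mu=0$, which is impossible. The main obstacle is the second step, namely making sure the hybrid (hysteretic) flow depends continuously on initial data near the periodic orbit, so that stability of $P$ transfers. I would handle this by noting that the orbit crosses $L^-$ and $L^+$ transversally. At $L^-$ the normal component of $X^+$ is nonzero because $R(x^*)>0$, and at $L^+$ we have $b_2\neq0$. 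The implicit function theorem then makes the crossing times smooth.
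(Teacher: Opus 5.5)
Your proposal is correct and follows the paper's route. The corollary is Lemma \ref{lema3} combined with the fact that the fold of $X^+$ is visible if and only if $a_1b_1>0$, which your second Lie derivative $X^+(X^+h)=a_1b_1$ confirms; your transfer from $|P'(x^*)|\neq 1$ to stability of the orbit only makes explicit what the paper asserts inside Lemma \ref{lema3}. One small misattribution: Lemma \ref{lema2} only excludes $a_1,a_2$ of the same nonzero sign, so $a_2\neq0$ must come from the subsection's standing assumption (or the separate $a_2=0$ computation giving $4a_1b_1\mu=0$), and $a_1b_1\neq0$ from the fact that a fold point is being discussed, i.e.\ from being in Cases 1.2--1.3.
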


Now, let us study the dynamics of the missing cases.

For Cases 1.2.3, 1.2.11, 1.3.3 and 1.3.11, we have $a_2=0$. Note that the previous expression for the fixed point is no longer valid. In this case, the first return map reduces to

\[
P(x)=
-\frac{\sqrt{b_1(b_1x^2-4a_1\mu)}}{b_1}.
\]

Assume that $x^*$ is a fixed point. Then

\[
x^*=
-\frac{\sqrt{b_1(b_1(x^*)^2-4a_1\mu)}}{b_1}.
\]

Squaring both sides yields

\[
b_1^2(x^*)^2
=
b_1\left(b_1(x^*)^2-4a_1\mu\right),
\]

which simplifies to

\[
4a_1b_1\mu=0.
\]

Since $a_1\neq0$, $b_1\neq0$ and $\mu>0$, we obtain a contradiction.
Therefore, the first return map admits no fixed points when $a_2=0$. So, for Cases 1.2.3 and 1.2.11, taking an initial condition $(x_0,y_0)$, the trajectory has a monotone zig-zag behavior restricted to HB region, with the $x$-coordinate going to $+ \infty$ for the Case 1.2.3 and the $x$-coordinate going to $-\infty$ for the Case 1.2.11.

For Cases 1.3.3 and 1.3.11 define
\[
n_3:=\dfrac{\left|x_0+\dfrac{2a_1\sqrt{\mu}}{\sqrt{a_1b_1}}\right|}{\left|
-2x_0
-\dfrac{\sqrt{b_1\left(b_1x_2+2a_1(\mu-y_0)\right)}}{b_1}
-\dfrac{\sqrt{b_1\left(b_1x_2-2a_1(y_0+\mu)\right)}}{b_1}
\right|}
\]
and
\[
n_4:=\dfrac{\left|x_0+\dfrac{2a_1\sqrt{\mu}}{\sqrt{a_1b_1}}\right|}{\left|
-x_0
-\dfrac{\sqrt{b_1\left(b_1x_2-2a_1(y_0+\mu)\right)}}{b_1}
\right|}.
\]
For Case 1.3.3, taking an initial condition $(x_0,y_0)$ with $y_0>\mu$ on the left-hand side of \(R_1^-\), the trajectory has a monotone zig-zag behavior restricted to HB region, with
$$
N_3:=\left\lceil n_3\right\rceil+1\geq2
$$
intersections with the line $y=\mu$ and has $\omega$-limit set $\left\{(+\infty,+\infty\right)\}$. Taking an initial condition $(x_0,y_0)$, with $y_0\leq\mu$ and
\[
x_0\leq-\dfrac{2a_1\sqrt{\mu}}{\sqrt{a_1b_1}},
\]
the trajectory has a monotone zig-zag behavior restricted to HB region, with
$$
N_4:=\left\lceil n_4\right\rceil+1\geq2
$$
intersections with the line $y=\mu$ and has $\omega$-limit set $\left\{(+\infty,+\infty\right)\}$.

Now, taking an initial condition $(x_0,y_0)$ with $y_0>\mu$ on the right-hand side of \(R_1^-\) or an initial condition $(x_0,y_0)$ with $y_0\leq\mu$ and
\[
x_0>-\dfrac{2a_1\sqrt{\mu}}{\sqrt{a_1b_1}},
\]
the trajectory has $\omega$-limit set $\{(+\infty,+\infty)\}$.

For Case 1.3.11, taking an initial condition $(x_0,y_0)$ with $y_0>\mu$ on the right-hand side of \(R_2^+\), the trajectory has a monotone zig-zag behavior restricted to HB region, with $N_3$ intersections with the line $y=\mu$ and has $\omega$-limit set $\left\{(-\infty,+\infty\right)\}$. Taking an initial condition $(x_0,y_0)$, with $y_0\leq\mu$ and
\[
x_0\geq-\dfrac{2a_1\sqrt{\mu}}{\sqrt{a_1b_1}},
\]
the trajectory has a monotone zig-zag behavior restricted to HB region, with $N_4$ intersections with the line $y=\mu$ and has $\omega$-limit set $\left\{(-\infty,+\infty\right)\}$.

Now, taking an initial condition $(x_0,y_0)$ with $y_0>\mu$ on the left-hand side of \(R_2^+\) or an initial condition $(x_0,y_0)$ with $y_0\leq\mu$ and
\[
x_0<-\dfrac{2a_1\sqrt{\mu}}{\sqrt{a_1b_1}},
\]
the trajectory has $\omega$-limit set $\{(-\infty,+\infty)\}$.

For Cases 1.2.2, 1.2.12, 1.3.2 and 1.3.12, the Poincaré map $P$ does not have a fixed point, by Lemma \ref{lema2}. Therefore, for Cases 1.2.2 and 1.2.12, taking an initial condition $(x_0,y_0)$, the trajectory has a monotone zig-zag behavior restricted to HB region, with the $x$-coordinate going to $+ \infty$ for the Case 1.2.2 and the $x$-coordinate going to $-\infty$ for the Case 1.2.12.

For Case 1.3.2, consider the half-line 
$$
R_3=\left\{(x,y)\in\mathbb{R}^2; \,y=\dfrac{b_2}{a_2}\left(x+\dfrac{2a_1\sqrt{\mu}}{\sqrt{a_1b_1}}\right)+\mu, \, x<0\right\},
$$
$$
n_5:=\dfrac{\left|x_0+\dfrac{2a_1\sqrt{\mu}}{\sqrt{a_1b_1}}\right|}{\left|
2\left(
-x_0
+\dfrac{a_2\mu}{b_2}
-\dfrac{\sqrt{b_1\left(b_1x_2-2a_1y_0+2a_1\mu\right)}}{b_1}
\right)
\right|}
$$
and
$$
n_6:=\dfrac{\left|x_0+\dfrac{2a_1\sqrt{\mu}}{\sqrt{a_1b_1}}\right|}{\left|
-x_0
-\dfrac{a_2(y_0-3\mu)}{b_2}
-\dfrac{\sqrt{b_1\left(b_1x_1^2-4a_1\mu\right)}}{b_1}
\right|}
$$
Taking an initial condition $(x_0,y_0)$ with $y_0>\mu$ on the left-hand side of \(R_1^-\), the trajectory has a monotone zig-zag behavior restricted to HB region, with
$$
N_5:=\left\lceil n_5\right\rceil+1\geq2
$$
intersections with the line $y=\mu$ and has $\omega$-limit set $(+\infty, +\infty)$. Taking an initial condition $(x_0,y_0)$ with $y_0\leq\mu$ lying on or above half-line $R_3$, the trajectory has a monotone zig-zag behavior restricted to HB region, with
$$
N_6:=\left\lceil n_6\right\rceil+1\geq2
$$
intersections with the line $y=\mu$ and has $\omega$-limit set $(+\infty, +\infty)$.

Now, taking an initial condition $(x_0,y_0)$ below the half-line $R_3$ on the right-hand side of \(R_1^-\), the trajectory  will converge to $(+\infty, +\infty)$.

For Case 1.3.12, consider the half-line 
$$
R_4=\left\{(x,y)\in\mathbb{R}^2; \,y=\dfrac{b_2}{a_2}\left(-x+\dfrac{2a_1\sqrt{\mu}}{\sqrt{a_1b_1}}\right)+\mu, \, x>0\right\}.
$$
Taking an initial condition $(x_0,y_0)$ with $y_0>\mu$ on the right-hand side of $R_2^+$, the trajectory has a monotone zig-zag behavior restricted to HB region, with $N_5$ intersections with the line $y=\mu$ and has $\omega$-limit set $ \{ (-\infty, +\infty) \}$. Taking an initial condition $(x_0,y_0)$ with $y_0\leq\mu$ lying on or above half-line $R_4$, the trajectory has a monotone zig-zag behavior restricted to HB region, with $N_6$ intersections with the line $y=\mu$ and has $\omega$-limit set $\{  (-\infty, +\infty) \}$.

Now, taking an initial condition $(x_0,y_0)$ below the half-line $R_4$ on the left-hand side of \(R_2^+\), the trajectory  will converge to $\{  (-\infty, +\infty) \}$.

For Case 1.2.4, we have
\begin{equation}\label{eq1.2.4}
    a_1>0, b_1<0, a_2 < 0 \quad \textrm{and} \quad b_2>0,
\end{equation}
for Case 1.2.10,
\begin{equation}\label{eq1.2.10}
    a_1<0, b_1>0, a_2 > 0 \quad \textrm{and} \quad b_2>0,
\end{equation}
for Case 1.3.4,
\begin{equation}\label{eq1.3.4}
    a_1>0, b_1>0, a_2 < 0 \quad \textrm{and} \quad b_2>0,
\end{equation}
and for Case 1.3.10,
\begin{equation}\label{eq1.3.10}
    a_1<0, b_1<0, a_2 > 0 \quad \textrm{and} \quad b_2>0.
\end{equation}
By Lemma \ref{lema1}, the only possible fixed point is of the form
\[
x^*=
\frac{a_1b_2}{a_2b_1}+\frac{a_2\mu}{b_2}.
\]
Substituting $x^*$ into the expressions
\[
(A-x)^2=B^2R(x),
\quad
R(x),
\quad
B(A-x),
\]
and taking into account \eqref{eq1.2.4}, \eqref{eq1.2.10}, \eqref{eq1.3.4} and \eqref{eq1.3.10}, the conditions \eqref{eq1lema}, \eqref{eq2lema} and \eqref{eq3lema} of Lemma \ref{lema1} are satisfied.

Therefore, by Lemma \ref{lema3}, we conclude that the periodic orbit corresponding to $x^*$ is globally attractive for Cases 1.2.4 and 1.2.10 and repulsive for Cases 1.3.4 and 1.3.10.

For Case 1.3.4, taking an initial condition $(x_0,y_0)$, with $x_0<x^*$ and outside the trajectories of the $X^+$ and $X^-$ that pass through point $(x^*,\mu)$, the trajectory has a monotone zig-zag behavior restricted to HB region, with the $x$-coordinate going to $- \infty$. Taking an initial condition $(x_0,y_0)$, with $x_0>x^*$ and outside the trajectories of the $X^+$ and $X^-$ that pass through point $(x^*,\mu)$, the trajectory has $\omega$-limit set $\{ (+\infty, +\infty) \}$.

For Case 1.3.10, taking an initial condition $(x_0,y_0)$, with $x_0>x^*$ and outside the trajectories of the $X^+$ and $X^-$ that pass through point $(x^*,\mu)$, the trajectory has a monotone zig-zag behavior restricted to HB region, with the $x$-coordinate going to $+\infty$. Taking an initial condition $(x_0,y_0)$, with $x_0<x^*$ and outside the trajectories of the $X^+$ and $X^-$ that pass through point $(x^*,\mu)$, the trajectory has $\omega$-limit set $\{ (-\infty, +\infty) \}$.

\begin{lemma}\label{lema4}
Assume that the Poincaré first return map $P$ admits the fixed point
\[
x^*=
\frac{a_1b_2}{a_2b_1}
+
\frac{a_2\mu}{b_2}.
\]
Then $P$ has no periodic point of minimal period two.
\end{lemma}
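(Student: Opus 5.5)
The plan is to exploit the explicit form $P(x)=A-B\sqrt{R(x)}$ from \eqref{eqP}, with $A,B,R$ as in \eqref{eqABR}. Write $Q(x)=B\sqrt{R(x)}=\operatorname{sgn}(b_1)\sqrt{x^2-4a_1\mu/b_1}$, so that $P(x)=A-Q(x)$. On each connected component of $D(P)$ (each half-line $\{x\ge \sqrt{4a_1\mu/b_1}\}$ or $\{x\le -\sqrt{4a_1\mu/b_1}\}$ when $a_1b_1>0$, or the half-lines $x\ge0$, $x\le0$ when $a_1b_1<0$), $Q$ is strictly monotone. Hence $P$ is strictly monotone on each such piece. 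By \eqref{eqP'}, $P'(x)=-b_1x/\sqrt{R(x)}$, whose sign is $-\operatorname{sgn}(x)$ times a constant sign. So $P$ is decreasing on one half of the domain and increasing on the other.

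Suppose $x_1\neq x_2$ with $P(x_1)=x_2$ and $P(x_2)=x_1$. The first step is to subtract the two relations, giving $x_1-x_2=Q(x_1)-Q(x_2)$. Next, rationalize: since $Q(x_1)^2-Q(x_2)^2=x_1^2-x_2^2$, we get
\[
Q(x_1)-Q(x_2)=\frac{(x_1-x_2)(x_1+x_2)}{Q(x_1)+Q(x_2)}
\]
whenever the denominator is nonzero. Cancelling $x_1-x_2\neq0$ yields $Q(x_1)+Q(x_2)=x_1+x_2$. Adding the two original equations gives $x_1+x_2=2A-(Q(x_1)+Q(x_2))$. Combining these, $x_1+x_2=A$ and $Q(x_1)+Q(x_2)=A$. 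Together with $Q(x_1)=A-x_2=x_1$, this says $x_1=Q(x_1)$, that is, $x_1^2=x_1^2-4a_1\mu/b_1$. This forces $a_1\mu/b_1=0$, contradicting $a_1b_1\neq0$ and $\mu>0$. This is the same contradiction already obtained in the case $a_2=0$.

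The main obstacle is the degenerate denominator $Q(x_1)+Q(x_2)=0$. In that case $Q(x_1)^2=Q(x_2)^2$, so $x_1^2=x_2^2$, hence $x_2=-x_1$, and $Q(x_1)=-Q(x_2)$. Since $Q(-x)=Q(x)$, this forces $Q(x_1)=0$. Then $x_1$ lies on $\partial D(P)$, and $x_2=P(x_1)=A$ together with $x_1=P(A)$. I would rule this out by checking directly that $P(A)=A-Q(A)=-A$ forces $Q(A)=2A$, i.e.\ $A^2-4a_1\mu/b_1=4A^2$, while $x_1^2=4a_1\mu/b_1$ and $x_1=-A$ give $A^2=4a_1\mu/b_1$. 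Then $Q(A)=0\neq 2A$ unless $A=0$, and $A=0$ is excluded by $a_2\neq0$. Alternatively, I would simply note that this boundary configuration is excluded by the generic assumption $x^*\notin\partial D(P)$ adopted after Lemma~\ref{lema1}.

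The hypothesis that $x^*$ exists is used only to place us in the cases $a_2\neq 0$ and $b_2\neq0$, where the formula \eqref{eqP} is valid. The argument would also be cross-checked with the monotonicity picture: any $2$-cycle must alternate between the increasing and decreasing branches, consistent with the algebra above.
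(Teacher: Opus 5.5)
Your proof is correct, and it takes a different route from the paper. The paper works with the single composed equation $P^2(x)=x$, written as $A-x=B\sqrt{R(P(x))}$. It then only asserts that squaring while ``keeping the sign conditions'' reduces this to $x=x^*$. Spelled out, with $c=4a_1\mu/b_1$ and your $Q$, the steps are:
\begin{itemize}
\item squaring gives $(A-x)^2=P(x)^2-c$;
\item substituting $P(x)=A-Q(x)$ and $Q(x)^2=x^2-c$ leaves the linear relation $AQ(x)=Ax-c$;
\item a second squaring yields $x=A/2+c/(2A)=x^*$.
\end{itemize}
So the paper identifies every solution of $P^2(x)=x$ with the explicit fixed point of Lemma~\ref{lema1}, dividing by $A$ and hence using $a_2\neq0$.

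You instead keep the two symmetric relations $x_2=A-Q(x_1)$ and $x_1=A-Q(x_2)$ of a putative genuine $2$-cycle. Subtracting and adding them forces $x_1+x_2=A$, and you reach the contradiction $4a_1\mu/b_1=0$ without ever computing $x^*$. Your argument is complete where the paper's is a sketch. It also shows that the hypothesis on $x^*$ is superfluous: no $2$-cycle exists for any parameters with $a_1b_1\neq0$ for which $P$ is defined. The paper's route gives, in exchange, the positive statement that $x^*$ is the only candidate solution of $P^2(x)=x$.

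Some details can be tightened.
\begin{itemize}
\item Your ``alternative'' treatment of the degenerate case $Q(x_1)+Q(x_2)=0$ does not work. The genericity assumption $R(x^*)>0$ concerns $x^*$, not the putative periodic point $x_1$, so it cannot exclude $R(x_1)=0$. It is your direct check that settles this case.
\item That check can be shortened. Once $Q(x_1)=0$, you also have $Q(x_2)=Q(-x_1)=0$, hence $x_1=A-Q(x_2)=A=x_2$. This contradicts $x_1\neq x_2$ and does not even need $a_2\neq0$.
\item Simpler still, square before rationalizing. The relations $(A-x_2)^2=x_1^2-c$ and $(A-x_1)^2=x_2^2-c$ subtract to $(x_1-x_2)(2A-2x_1-2x_2)=0$. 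Hence $x_1+x_2=A$, and then $c=0$, with no denominator to worry about.
\item When $a_1b_1>0$, only one of the two half-lines belongs to $D(P)$. For instance, if $a_1,b_1>0$ then $D(P)$ is $x\le-\sqrt{c}$, because from $x\ge\sqrt{c}$ the $X^+$-orbit never reaches $L^-$. This is harmless, since your algebra excludes $2$-cycles on the whole natural domain of the formula.
\end{itemize}
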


\begin{proof}
A fixed point of $P^2$ is either a fixed point of $P$ or a periodic point
of minimal period two. Thus, it suffices to prove that every solution of
$
P^2(x)=x
$
also satisfies
$
P(x)=x.
$

Since
\[
P(x)=A-B\sqrt{R(x)},
\]
we have
\[
P^2(x)=A-B\sqrt{R(P(x))}.
\]
The equation \(P^2(x)=x\) is therefore equivalent to
\[
A-x=B\sqrt{R(P(x))}.
\]
Applying Lemma \ref{lema1} to the map $P^2$, and keeping the sign
conditions required by the square roots, the equation reduces to
\[
x=
\frac{a_1b_2}{a_2b_1}
+
\frac{a_2\mu}{b_2}
=x^*.
\]
Thus $P^2(x)=x$ has no solution other than $x^*$, i.e., the map $P$ has no periodic point of
minimal period two.
\end{proof}

%
%


\section{Conclusion}\label{conc}

In this paper we obtained a complete classification of the $\omega$-limit sets for a family of planar switched control systems with large hysteresis generated by two affine vector fields. Owing to the explicit form of the solutions, all possible parameter configurations were analyzed, allowing a complete description of the global asymptotic behavior.

The use of the Poincaré first return map made it possible to characterize the existence, uniqueness and stability of periodic orbits whenever such trajectories arise. In particular, explicit parameter conditions were obtained for the existence of attracting and repelling limit cycles, while the remaining parameter regions were shown to exhibit either monotone zig-zag dynamics inside the hysteresis band or unbounded trajectories.

Beyond the classification of limit sets, the obtained bifurcation diagrams reveal the global organization of the dynamics. As the parameters vary, the system undergoes transitions corresponding to the creation or disappearance of periodic orbits, exchanges of stability, degenerate configurations giving rise to continuum of periodic trajectories, and changes between bounded and unbounded asymptotic behavior.

Therefore, the present work provides both a complete classification of the limit sets and a global qualitative description of the bifurcation structure associated with this class of hysteretic control systems.

Future research includes extending these results to nonlinear vector fields, higher-dimensional hysteretic systems, and Filippov systems with multiple switching manifolds.

\section*{Acknowledgments}

Tiago Carvalho is partially supported by S\~{a}o Paulo Research Foundation (FAPESP grants \# 2024/15612-6, \#2021/12395-6, \#2019/10269-3 and \#2022/02819-6) and by Conselho Nacional de Desenvolvimento Cient\'{i}fico e Tecnol\'{o}gico (CNPq Grants 309378/2023-0 and 401974/2025-1).

Bruno de Souza Rangel is supported by S\~{a}o Paulo Research Foundation (FAPESP grant \#2023/18081-9).

\section*{Author Contributions}
All authors contributed equally to this work, including conception, design, analysis, and writing. All authors have read and approved the final manuscript.

\section*{Data availability}
Data sharing is not applicable to this article, as no data sets were generated or analyzed during the current study.

\section*{Declaration of interest}
The authors declare that they have no conflict of interest.

\end{document}